\newcommand\err{\mathbb R}
\newcommand{\eps}{\varepsilon}
\newcommand\Xs{X^\ast}
\newcommand\xs{x^\ast}
\newcommand\ys{y^\ast}
\newcommand{\HB}{\text{H{\kern -0.35em}B}}
\DeclareMathOperator{\spann}{span}
\DeclareMathOperator{\sign}{\mbox{sign}}
\newcounter{abcd}
{\setcounter{abcd}{0}
\begin{list}%
{{\rm (\alph{abcd})}} 
{\usecounter{abcd}
\parsep=\parskip
\topsep=1pt plus 2pt minus 1pt
\itemsep=1pt plus 2pt minus 1pt
\leftmargin=3\baselineskip \labelsep=.6\baselineskip
\labelwidth=2.4\baselineskip

\rightmargin 0pt}%
}%
{\end{list}}
\title{An improvement of a theorem of Heinrich, Mankiewicz, Sims,
  and Yost}
\author[T.~A.~Abrahamsen]{Trond A. Abrahamsen}
\address{Department of Mathematics, University of Agder, Postbox 422\\
4604 Kristiansand, Norway.} \email{trond.a.abrahamsen@uia.no}
\urladdr{http://home.uia.no/trondaa/index.php3}
\subjclass[2010]{Primary 46B20; Secondary 46B04}
\keywords {Almost isometric ideal; Diameter 2 property; Daugavet
  property; Octahedral space; Almost square space}
\newtheorem{thm}{Theorem}[section]
\newtheorem{prop}[thm]{Proposition}
\newtheorem{lem}[thm]{Lemma}
\theoremstyle{definition}
\newtheorem{defn}[thm]{Definition}
\theoremstyle{remark}
\newtheorem{rem}[thm]{Remark}
\begin{document}

\begin{abstract}
  Heinrich, Mankiewicz, Sims, and Yost proved that every separable
  subspace of a Banach space $Y$ is contained in a separable ideal in
  $Y$. We improve this result by replacing the term ``ideal'' with the term
  ``almost isometric ideal''. As a consequence of this we obtain, in terms of
  subspaces, characterizations of diameter 2
  properties, the Daugavet property along with the properties of being
  an almost square space and an octahedral space.

\end{abstract}

\maketitle

\section{Introduction}\label{sec1}

Let $Y$ be a Banach space and $X$ a subspace of $Y$. Recall that
$X$ is an \emph{ideal} in $Y$ if $X^\perp$, the annihilator of $X$, is
the kernel of a contractive projection on the dual $Y^*$ of $Y$. A linear operator
$\varphi$ from $X^*$ to $Y^*$ is called
a \emph{Hahn-Banach extension operator} if
$\varphi(x^*)(x) = x^*(x)$ and $\|\varphi(x^*)\|=\|x^*\|$
for all $x \in X$ and $x^* \in X^*$. We denote by $\HB(X,Y)$ the set
of all Hahn-Banach extension operators from $X^*$ to $Y^*$. We say
that $X$ is locally 1-complemented in $Y$ if for
every $\eps > 0$ and every finite dimensional subspace $E$ of $Y$
there exists a linear operator $T: E \to X$ such that $Te = e$ for all
$e \in E \cap X$ and $\|T\| \le 1 + \eps$. The fact that a Banach space
is locally 1-complemented in its bidual is commonly referred to as
\emph{the Principle of Local Reflexivity (PLR)}.  

The following theorem is a collection of known results.

\begin{thm}\label{thm:opfk}
  Let $X$ be a subspace of a Banach space $Y$.
  The following statements are equivalent.
  \begin{itemize}
  \item[(a)] $X$ is an ideal in $Y$. 
  \item[(b)] There exists $\varphi \in \HB(X,Y)$.
  \item[(c)] $Y$ is locally 1-complemented in $X$.
  \item[(d)] There exists $\varphi \in \HB(X,Y)$ such that for every
    $\eps > 0$, every finite dimensional subspace $E$ of $Y$ and
    every finite dimensional subspace $F$ of $X^*$ there exists a
    linear operator $T: E \to X$ such that
    \begin{itemize}
      \item[(d1)] $Te = e$ for all $e \in E \cap X$, 
      \item[(d2)]$\|Te\| \le (1 + \eps)\|e\|$ for every $e \in
        E$, and
      \item[(d3)] $\varphi f(e) = f(Te)$ for every $e \in E$, $f \in F$.
    \end{itemize}
  \end{itemize}
\end{thm}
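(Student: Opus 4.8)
The plan is to prove $(a)\Leftrightarrow(b)$ by a direct duality argument and then close the rest via $(b)\Rightarrow(d)\Rightarrow(c)\Rightarrow(b)$, observing that $(d)\Rightarrow(b)$ is part of the statement of (d) and that $(d)\Rightarrow(c)$ follows by taking $F=\{0\}$ and discarding (d3); so the substance is in three implications. For $(b)\Rightarrow(a)$, given $\varphi\in\HB(X,Y)$ let $R\colon Y^*\to X^*$ be the restriction map and put $P=\varphi\circ R$. Then $R\varphi=\mathrm{id}_{X^*}$ since $\varphi x^*$ extends $x^*$, so $P^2=P$; $\|P\|\le\|\varphi\|\,\|R\|\le1$; and, because $\varphi$ is isometric, $\ker P=\ker R=X^\perp$, so $X$ is an ideal. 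Conversely, given a norm-one projection $P$ on $Y^*$ with $\ker P=X^\perp$, I would check that $R$ maps $\mathrm{ran}\,P$ isometrically onto $X^*$ --- surjectivity and the inequality $\|z^*\|\le\|Rz^*\|$ for $z^*\in\mathrm{ran}\,P$ both come from applying $P$ to a norm-preserving Hahn--Banach extension $\tilde x^*\in Y^*$ of a given $x^*\in X^*$ and noting $\tilde x^*-P\tilde x^*\in\ker P=X^\perp$ --- and then $\varphi:=(R|_{\mathrm{ran}\,P})^{-1}\colon X^*\to Y^*$ is linear, isometric and extends functionals, i.e.\ $\varphi\in\HB(X,Y)$.

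For $(b)\Rightarrow(d)$, consider the adjoint $\varphi^*\colon Y^{**}\to X^{**}$, which has norm $1$. For $y\in Y$ one checks $(\varphi^*y)(f)=(\varphi f)(y)$ for all $f\in X^*$, so $\varphi^*$ carries $Y$ into $X^{**}$, and $\varphi^*x$ is the canonical image of $x$ whenever $x\in X$. Given $\eps>0$, a finite-dimensional $E\subseteq Y$ and a finite-dimensional $F\subseteq X^*$, set $G:=\varphi^*(E)$, a finite-dimensional subspace of $X^{**}$, and apply the strong form of the Principle of Local Reflexivity inside $X$ to the pair $(G,F)$: there is $S\colon G\to X$ with $\|S\|\le1+\eps$, with $S$ the identity on $G\cap X$, and with $g^{**}(f)=f(Sg^{**})$ for all $g^{**}\in G$, $f\in F$. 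Then $T:=S\circ\varphi^*|_E\colon E\to X$ satisfies (d1), since $\varphi^*x$ lies in $G\cap X$ and is fixed by $S$ when $x\in E\cap X$; (d2), since $\|Te\|\le(1+\eps)\|\varphi^*e\|\le(1+\eps)\|e\|$; and (d3), since $f(Te)=f(S\varphi^*e)=(\varphi^*e)(f)=(\varphi f)(e)$.

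For $(c)\Rightarrow(b)$, direct the set of pairs $(E,\eps)$ with $E\subseteq Y$ finite-dimensional and $\eps>0$, choose by (c) operators $T_{E,\eps}\colon E\to X$ with $\|T_{E,\eps}\|\le1+\eps$ fixing $E\cap X$, fix an ultrafilter $\mathcal U$ refining this net, and define $\psi\colon Y\to X^{**}$ by letting $\psi(y)$ be the weak$^*$-limit along $\mathcal U$ of the canonical images of $T_{E,\eps}y$ in $X^{**}$; this exists because those images eventually lie in a fixed ball, which is weak$^*$-compact, and routing everything through one $X^{**}$-valued map is what keeps $\psi$, and hence $\varphi$, linear. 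One gets $\|\psi\|\le1$ and $\psi(x)=x$ for $x\in X$, and then $\varphi(x^*)(y):=\psi(y)(x^*)$ defines $\varphi\colon X^*\to Y^*$ with $\|\varphi x^*\|\le\|x^*\|$ and $\varphi(x^*)|_X=x^*$, so $\varphi\in\HB(X,Y)$.

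The step I expect to be the main obstacle is $(b)\Rightarrow(d)$: it relies on the refined version of the Principle of Local Reflexivity that additionally matches a prescribed finite-dimensional subspace of the dual, and one must be careful about how $\varphi^*$ interacts with the canonical embeddings of $X$ and $Y$ into their biduals. A lesser point requiring attention is the linearity of $\varphi$ in $(c)\Rightarrow(b)$, which is the reason for constructing $\psi$ before dualizing.
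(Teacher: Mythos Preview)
Your argument is correct. Note, however, that the paper does not supply its own proof of this theorem: it is presented as a collection of known results, with the equivalence of (a), (b), and (c) attributed to Fakhoury and to Kalton, and the further equivalence with (d) to Oja and P{\~o}ldvere. Your proof follows the standard lines of those references --- in particular, deriving (d) from (b) by pushing $E$ into $X^{**}$ via $\varphi^*$ and then applying the strong form of the Principle of Local Reflexivity is exactly the mechanism behind the Oja--P{\~o}ldvere result, and the weak$^*$-compactness/ultrafilter construction you give for $(c)\Rightarrow(b)$ is the classical one. So there is nothing to compare against in the paper itself; your write-up simply fills in what the paper cites.
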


The equivalence of (a), (b), and (c) were independently discovered by
Fakhoury \cite{Fak4} and Kalton \cite{Kal}. Later Oja and P{\~o}ldvere
\cite{MR2262909} showed that these in turn are
equivalent to statement (d). 

The following result is essentially due to Heinrich and Mankiewicz
\cite{MR675426}. Sims and Yost, however, gave in \cite{SiYo2} another
proof of this result, using a finite dimensional lemma and a
compactness argument due to Lindenstrauss \cite{Lin}. As stated below
the result appears for the first time in \cite[III.Lemma~4.3]{HWW}. 

\begin{thm}\label{thm:sims-yost-hww}
 Let $Y$ be a Banach space, $X$ a separable subspace of $Y$, and $W$ a
 separable subspace of $Y^*$. Then there
 exists a separable subspace $Z$ of $Y$ containing $X$ and $\varphi \in
 \HB(Z,Y)$ such that $\varphi (Z^*) \supset W$.      
\end{thm}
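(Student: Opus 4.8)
I would build $Z$ as the closure of an increasing union of finite-dimensional subspaces of $Y$ and, simultaneously, obtain the extension operator on the dual side as a cluster point of a sequence of finite-rank approximate extension operators, in the spirit of the compactness argument of Lindenstrauss. It is convenient to manufacture first an operator on $Y^*$: suppose $P\colon Y^*\to Y^*$ is linear with $\norm P\le1$ and $Z^\perp\subseteq\kerr P$, suppose $P$ restricts to the identity on $W$, and suppose $(P\tilde z^*)|_Z=z^*$ whenever $\tilde z^*\in Y^*$ extends $z^*\in Z^*$. Then $\varphi(z^*):=P\tilde z^*$ is well defined (the choice of extension $\tilde z^*$ is irrelevant, since $Z^\perp\subseteq\kerr P$), linear, and extends each $z^*$; from $\norm{\varphi(z^*)}=\norm{P\tilde z^*}\le\norm P\norm{\tilde z^*}$ one gets $\norm{\varphi(z^*)}\le\norm{z^*}$ by taking the infimum over extensions, and $\norm{\varphi(z^*)}\ge\norm{z^*}$ because $\varphi(z^*)$ extends $z^*$; hence $\varphi\in\HB(Z,Y)$, and $\varphi(Z^*)=P(Y^*)\supseteq P(W)=W$. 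So it suffices to produce such a $P$.

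The engine is a finite-dimensional lemma, a local approximate form of the implication (a)$\Rightarrow$(b) of Theorem~\ref{thm:opfk}: for every finite-dimensional $E\subseteq Y$, every finite-dimensional $F\subseteq Y^*$ and every $\eps>0$ there are a finite-dimensional $E'$ with $E\subseteq E'\subseteq Y$ and a linear $\psi\colon(E')^*\to Y^*$ with $\norm\psi\le1+\eps$, $\psi(e^*)|_{E'}=e^*$ for all $e^*\in(E')^*$, and $\norm{\psi(f|_{E'})-f}\le\eps\norm f$ for all $f\in F$. Granting this, I run a countable induction. Fix $(x_n)$ dense in $X$, $(w_n)$ dense in $W$, and $\eps_n\downarrow0$ with $\eps_n\le1$, and put $E_0=F_0=\{0\}$. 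Given finite-dimensional $E_{n-1}\subseteq Y$ and $F_{n-1}\subseteq Y^*$, set $F_n:=F_{n-1}+\spann\{w_n\}$ and apply the lemma to $E:=E_{n-1}+\spann\{x_n\}$, $F:=F_n$ and $\eps:=\eps_n$, obtaining $E_n:=E'$ and $\psi_n:=\psi$. Let $Z:=\overline{\bigcup_nE_n}$. Then $Z$ is separable, $X\subseteq Z$ (as $(x_n)$ is dense in $X$), $\bigcap_nE_n^\perp=Z^\perp$, and $\bigcup_nF_n$ is dense in a subspace containing $W$.

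To conclude, let $P_n\colon Y^*\to Y^*$ be $P_ny^*:=\psi_n(y^*|_{E_n})$; then $\norm{P_n}\le1+\eps_n\le2$, $P_n$ vanishes on $Z^\perp\subseteq E_n^\perp$, $(P_ny^*)|_{E_n}=y^*|_{E_n}$, and $\norm{P_nf-f}\le\eps_n\norm f$ for all $f\in F_n$ (hence for all $f\in F_k$ with $k\le n$, by the nesting of the $F_k$). The operators of norm $\le2$ sit inside the product $\prod_{y^*\in Y^*}\{z^*\colon\norm{z^*}\le2\norm{y^*}\}$, which is compact by Banach--Alaoglu and Tychonoff; let $P$ be a cluster point of $(P_n)$, realised by a subnet whose indices tend to infinity. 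Then $P$ is linear; $\norm P\le1$ by weak${}^*$ lower semicontinuity of the norm together with $\eps_n\to0$; $P$ vanishes on $Z^\perp$; $P$ is the identity on each $F_k$ (since $\norm{P_nf-f}\le\eps_n\norm f\to0$ for $n\ge k$ along the subnet), hence, by norm-continuity, on $W$; and $(P\tilde z^*)|_Z=z^*$ for every extension $\tilde z^*$ of $z^*\in Z^*$ (evaluate at a point of some $E_m$ and use $(P_n\tilde z^*)|_{E_n}=\tilde z^*|_{E_n}$ for $n\ge m$ along the subnet, then extend by density to $Z$). Feeding this $P$ into the first paragraph completes the proof. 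The real work is the finite-dimensional lemma --- producing, for finite data $E,F,\eps$, an approximate Hahn--Banach extension operator on $(E')^*$ of norm within $\eps$ of $1$ that is compatible with $F$, the enlargement of $E$ to $E'$ being precisely what makes this possible; the induction and the passage to the limit are then routine bookkeeping and soft compactness.
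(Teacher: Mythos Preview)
Your finite-dimensional ``engine'' lemma is false in general, and this is where the argument breaks. Suppose such $E'$ and $\psi\colon (E')^*\to Y^*$ exist with $\|\psi\|\le 1+\eps$ and $\psi(e^*)|_{E'}=e^*$ for all $e^*\in (E')^*$. Taking adjoints, $\psi^*\colon Y^{**}\to (E')^{**}=E'$ satisfies $\|\psi^*\|\le 1+\eps$ and $\psi^*|_{E'}=I_{E'}$ (since for $y\in E'$ and $e^*\in (E')^*$ one has $\psi^*(y)(e^*)=\psi(e^*)(y)=e^*(y)$). Thus $\psi^*|_Y\colon Y\to E'$ is a linear projection of $Y$ onto $E'$ with norm at most $1+\eps$. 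Your lemma therefore asserts that every finite-dimensional subspace of $Y$ is contained in a $(1+\eps)$-complemented finite-dimensional subspace, for every $\eps>0$; equivalently, $Y$ has the $\pi_1$-property (in particular the metric approximation property). This fails for many Banach spaces, so the lemma cannot hold as stated, and the ``enlargement of $E$ to $E'$'' does not rescue it.

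The paper (following Sims--Yost and \cite[III.Lemma~4.3]{HWW}) avoids precisely this trap by working on the primal side. The finite-dimensional lemma there (see Lemma~\ref{lem-hww+}, or its predecessor \cite[III.Lemma~4.2]{HWW}) produces operators $T_E\colon E\to Z\subset M$ from finite-dimensional pieces of $Y$ \emph{into} the space $M$ under construction, fixing $E\cap M$ and almost preserving a finite set of functionals. The compactness limit $S$ of these operators lands in $M^{**}$, not in $M$, and one then sets $\varphi(m^*)(y)=m^*(Sy)$. No norm-one projection of $Y$ onto a subspace is ever required; the bidual absorbs the lack of complementation. If you want a dual-side formulation, the honest analogue of the local step is an operator $\psi\colon (E')^*\to Y^*$ satisfying only $\psi(e^*)|_{B}=e^*|_{B}$ for a fixed finite-dimensional $B\subset E'$ (not on all of $E'$), which is what one actually gets by dualising the HWW lemma --- but then your first paragraph no longer applies, and you are pushed back to the primal construction.
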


In the language of ideals this result says that every separable subspace of a
Banach space $Y$ is contained in a separable ideal in $Y$. Thus every
non-separable Banach space contains an infinite number of
ideals. Looked upon in this way ideals seems to occur quite
frequently. 

The following stronger form of an ideal was introduced and
studied in \cite{ALN2}.

 \begin{defn}\label{def:ai-hbext}
  A subspace $X$ of a Banach space $Y$ is said to be an \emph{almost
    isometric ideal (ai-ideal)} if for every $\eps >
  0$ and every finite dimensional subspace $E$ of $Y$, there exists a
  linear operator $T:E \to X$ which satisfies (d1) in
  Theorem \ref{thm:opfk} as well as 
  \begin{itemize}
    \item [(d2')] $(1-\eps)\|e\|\leq\|Te\|\leq (1+\eps)\|e\|$ for
    $e\in E$.
  \end{itemize}
\end{defn}

  In \cite{ALN2} the following was shown.

\begin{thm}\label{thm:aln2}
  Let $X$ be a subspace of a Banach space $Y$.
  The following statements are equivalent.
  \begin{itemize}
    \item[(a)]$X$ is an ai-ideal in $Y$.
    \item[(b)]There exists $\varphi \in \HB(X,Y)$ such that for every
      $\eps > 0$, every finite dimensional
    subspace $E$ of $Y$,  and every finite
  dimensional subspace $F$ of $X^*$ there exists a
    linear operator $T: E \to X$ which satisfies (d1) and (d3) in
    Theorem \ref{thm:opfk} and (d2') in Definition \ref{def:ai-hbext}.      
  \end{itemize}
\end{thm}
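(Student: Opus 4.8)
The implication $(b)\Rightarrow(a)$ is immediate: if $\varphi\in\HB(X,Y)$ is as in (b), then for $\eps>0$ and finite dimensional $E\subseteq Y$ we apply (b) with $F=\{0\}$ to get $T\colon E\to X$ satisfying (d1) of Theorem~\ref{thm:opfk} and (d2') of Definition~\ref{def:ai-hbext}, so $X$ is an ai-ideal. The content is $(a)\Rightarrow(b)$, which I would split into two parts: (i) manufacture a Hahn--Banach extension operator $\varphi$ out of the near-extensions an ai-ideal supplies, and (ii) show that this $\varphi$ admits the required local approximation, including the interpolation condition (d3).

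For part (i), let $\Lambda$ be the set of pairs $\lambda=(E,\eps)$, $E\subseteq Y$ finite dimensional and $\eps>0$, directed by $(E_1,\eps_1)\preceq(E_2,\eps_2)$ iff $E_1\subseteq E_2$ and $\eps_1\ge\eps_2$; for each $\lambda=(E,\eps)$ fix $T_\lambda\colon E\to X$ as in Definition~\ref{def:ai-hbext}, so $T_\lambda$ satisfies (d1) and (d2'), in particular $\|T_\lambda\|\le1+\eps$. Since $E$ embeds isometrically in $Y$, the restriction map $\Ys\to E^\ast$ is a metric surjection, so by a standard lifting lemma there is a linear operator $S_\lambda$ from the (finite dimensional) range of $T_\lambda^\ast$ into $\Ys$ that is a right inverse of the restriction map and has $\|S_\lambda\|\le1+\eps$. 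Put $\Phi_\lambda:=S_\lambda\circ T_\lambda^\ast\colon\Xs\to\Ys$. Then $\Phi_\lambda$ is linear, $\|\Phi_\lambda(\xs)\|\le(1+\eps)^2\|\xs\|$, $\Phi_\lambda(\xs)$ restricts to $\xs\circ T_\lambda$ on $E$, and hence, by (d1), restricts to $\xs$ on $E\cap X$. For $\eps\le1$ the tail of the net $(\Phi_\lambda)$ lies in the set of all maps $\Xs\to\Ys$ carrying $\xs$ to a functional of norm at most $4\|\xs\|$, which is compact in the topology of pointwise \weakstar-convergence; let $\varphi$ be a cluster point. Then $\varphi$ is linear (a pointwise cluster point of linear maps), and since for each fixed $\eta>0$ the $\lambda$ with $\eps_\lambda<\eta$ are cofinal in $\Lambda$ we get $\|\varphi(\xs)\|\le(1+\eta)^2\|\xs\|$, hence $\|\varphi(\xs)\|\le\|\xs\|$. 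Finally, given $x\in X$, for every $\lambda$ with $x\in E$ we have $\Phi_\lambda(\xs)(x)=\xs(x)$, so in the limit $\varphi(\xs)(x)=\xs(x)$; thus $\varphi(\xs)$ extends $\xs$, so also $\|\varphi(\xs)\|\ge\|\xs\|$, and $\varphi\in\HB(X,Y)$.

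For part (ii), fix $\eps>0$, a finite dimensional $E\subseteq Y$ and a finite dimensional $F\subseteq\Xs$; replacing $F$ by a basis we may assume $F=\spann\{f_1,\dots,f_k\}$ with $f_1,\dots,f_k$ linearly independent, and we fix $x_1,\dots,x_k\in X$ with $f_i(x_j)=\delta_{ij}$ (then the $x_j$ are linearly independent too). Let $\delta>0$, to be specified. Because $\varphi$ is a cluster point of $(\Phi_\lambda)$ and the $\lambda\succeq(E,\delta)$ are cofinal, there is $\lambda=(E',\eps')$ with $E\subseteq E'$, $\eps'<\delta$, and $|\Phi_\lambda(f_i)(e)-\varphi(f_i)(e)|<\delta$ for every $i$ and every $e$ in a fixed basis of $E$; since $E$ is finite dimensional this gives, with $T_0:=T_\lambda|_E$, the estimate $\|\varphi(f_i)|_E-f_i\circ T_0\|_{E^\ast}<\delta'$ where $\delta'\to0$ as $\delta\to0$. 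Put $g_i:=\varphi(f_i)|_E-f_i\circ T_0$; each $g_i$ vanishes on $E\cap X$, because both $\varphi(f_i)$ and (by (d1)) $f_i\circ T_0$ restrict to $f_i$ there. Define $R\colon E\to X$ by $Re:=\sum_{j=1}^k g_j(e)\,x_j$, and set $T:=T_0+R$. Then: $R$ vanishes on $E\cap X$ (the $g_j$ vanish there and the $x_j$ are independent), so $T$ satisfies (d1); for $e\in E$ one has $f_i(Te)=f_i(T_0e)+\sum_j g_j(e)f_i(x_j)=f_i(T_0e)+g_i(e)=\varphi(f_i)(e)$, so by linearity $T$ satisfies (d3) for all $f\in F$; and $\|R\|\le\sum_j\|x_j\|\,\|g_j\|_{E^\ast}\le\delta'\sum_j\|x_j\|$, so choosing $\delta$ small enough that $\eps'+\|R\|<\eps$ gives, from $\big|\,\|Te\|-\|T_0e\|\,\big|\le\|R\|\,\|e\|$ together with (d2') for $T_0$, the bound (d2') for $T$ with constant $\eps$. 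This is exactly condition (b).

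The main obstacle is part (i): obtaining the liftings $S_\lambda$ with the uniform bound $\|S_\lambda\|\le1+\eps$, rather than a bound growing with $\dim E$, since it is precisely this that makes $(\Phi_\lambda)$ relatively compact and forces the cluster point to be norm-preserving rather than merely bounded. The rest is routine: the perturbation in (ii) is finite dimensional linear algebra once $T_0$ is close enough, and the biorthogonal system $(f_i),(x_j)$ together with the splitting along $E\cap X$ are exactly what allow one to correct (d3) without spoiling (d1) and (d2'); the Tychonoff/cluster-point bookkeeping in (i) is standard.
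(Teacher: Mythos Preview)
The paper does not actually prove Theorem~\ref{thm:aln2}; it is quoted from \cite{ALN2}, and Lemma~\ref{lem:ai-equi} is likewise attributed to ``the proof of \cite[Theorem~\ref{thm:aln2}]{ALN2}''. So there is no in-paper proof to compare against. That said, your argument is correct and is close in spirit to how the paper itself handles the analogous construction in the proof of the Main Theorem: there one extends each $T_E$ (nonlinearly) to an $S_E:Y\to M^{\ast\ast}$, passes to a Tychonoff cluster point $S$, and sets $\varphi m^\ast(y)=m^\ast(Sy)$. Your part~(i) dualises this: instead of pushing $Y$ into the bidual of $X$, you lift $T_\lambda^\ast$ through a $(1+\eps)$-section $S_\lambda$ of the restriction $Y^\ast\to E^\ast$ and cluster the resulting $\Phi_\lambda:X^\ast\to Y^\ast$ directly. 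Both routes are standard Lindenstrauss-type compactness arguments; yours avoids the bidual at the cost of invoking the finite-dimensional lifting lemma for quotient maps, which is exactly the ``main obstacle'' you flag, and it is indeed the only nontrivial ingredient in~(i).

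Your part~(ii) is the standard perturbation trick (biorthogonal correction $R$) used to upgrade approximate conditions to the exact (d1) and (d3); this is precisely the mechanism behind Lemma~\ref{lem:ai-equi}, and your execution is clean. One small remark: when you pick $\lambda=(E',\eps')$ close to the cluster point, you should say explicitly that $E\cap X\subset E'\cap X$ so that $T_0=T_\lambda|_E$ really is the identity on $E\cap X$ --- you use this when arguing that each $g_i$ vanishes there. This is immediate from $E\subset E'$, so it is not a gap, just worth stating.
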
 
  
  The $\varphi$ in Theorem \ref{thm:aln2} is called an
  \emph{almost isometric} Hahn-Banach extension operator associated
  with the ai-ideal $X$ in $Y$. We denote by $\HB_{ai}(X,Y)$ the set
  of such operators.

The main result of this paper is an improvement of Theorem
\ref{thm:sims-yost-hww} in which the
Hahn-Banach extension operator is replaced by an almost isometric one.

\begin{thm}[Main Theorem]\label{thm:abr}
 Let $Y$ be a Banach space, $X$ a separable subspace of $Y$, and $W$ a
 separable subspace of $Y^*$. Then there exists a separable subspace
 $Z$ of $Y$ containing $X$ and $\varphi \in \HB_{ai}(Z,Y)$ such that
 $\varphi (Z^*) \supset W$.      
\end{thm}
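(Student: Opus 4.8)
The plan is to follow the proof of Theorem~\ref{thm:sims-yost-hww} due to Sims and Yost (cf.\ \cite[III.Lemma~4.3]{HWW}), keeping its architecture but strengthening its finite-dimensional engine so that the extension operator produced is \emph{almost isometric}. Recall that one builds $Z=\overline{\bigcup_{n}Z_{n}}$ as the closure of an increasing union of finite-dimensional subspaces of $Y$, together with an increasing sequence of finite-dimensional subspaces of $Y^{*}$ carrying a partial norm-preserving extension operator, where at step $n$ one invokes a finite-dimensional lemma of Lindenstrauss \cite{Lin}: given the current finite-dimensional data and an error $\eps_{n}$, it enlarges both pieces and records a finite-rank operator obeying the current constraints and of norm $\le 1+\eps_{n}$. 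A diagonal/weak$^{*}$-compactness argument then produces, in the limit, an operator $\varphi\in\HB(Z,Y)$ and, for every finite-dimensional $E\subseteq Y$ and $\eps>0$, a local operator $T\colon E\to Z$ satisfying (d1) and (d2); the usual bookkeeping — fix dense sequences $(x_{k})$ in $X$ and $(w_{k})$ in $W$, add $x_{n}$ to $Z_{n}$ and feed $w_{n}$ (and the data needed for $Z$ to norm it) into the dual side at stage $n$ — gives $X\subseteq Z$ and $\varphi(Z^{*})\supseteq W$.

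By Theorem~\ref{thm:aln2} it suffices to run this construction so that the recorded operators are almost isometries, i.e.\ so that the limiting local operators satisfy not only (d1), (d3) and the upper bound in (d2$'$) but also the lower bound $\|Te\|\ge(1-\eps)\|e\|$; then $\varphi\in\HB_{ai}(Z,Y)$ and $Z$ is an ai-ideal. Thus the crux is an \emph{almost isometric version of Lindenstrauss's lemma}: for finite-dimensional $E\subseteq Y$, $N\subseteq Y^{*}$ and $\eps>0$ one wants finite-dimensional enlargements $E\subseteq F\subseteq Y$, $N\subseteq M\subseteq Y^{*}$ and a finite-rank $T$ which, besides agreeing with the identity on the old part and being compatible with the partial extension operator on $M$ (the finite form of (d3)), satisfies $(1-\eps)\|f\|\le\|Tf\|\le(1+\eps)\|f\|$ for $f\in F$. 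The lower estimate is the new content. One mechanism for it is that it is forced by the compatibility condition together with norm-preservation on the dual side: writing $z^{*}\mapsto\psi(z^{*})$ for the partial norm-preserving extension operator, compatibility gives $\|Tf\|\ge\sup_{z^{*}\in M,\,\|z^{*}\|\le1}|\langle f,\psi(z^{*})\rangle|$, and this is at least $(1-\eps)\|f\|$ as soon as $M$ has been enlarged so that $\psi$ carries appropriate functionals onto functionals that $(1-\eps)$-norm a fine net on the unit sphere of $F$. Making this precise — choosing the enlargements and the operators coherently with the data already built and with Lindenstrauss's compactness argument, and keeping enough norming functionals available at every stage so that the two-sided estimate persists in the limit — is the technical heart of the matter, and the step I expect to be the main obstacle.

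Granting the strengthened lemma, the rest is a routine adaptation: interleave the three tasks (add $x_{n}$; feed $w_{n}$ and its norming data into the dual side; apply the almost isometric lemma with $\eps_{n}\to0$), pass to the limit by weak$^{*}$-compactness to obtain $\varphi\in\HB(Z,Y)$ with $\varphi(Z^{*})\supseteq W$, and observe that, since every recorded operator was an almost isometry, the limiting local operators satisfy (d1), (d3) and (d2$'$), so $\varphi\in\HB_{ai}(Z,Y)$ by Theorem~\ref{thm:aln2}. As $Z$ is separable, contains $X$, and satisfies $\varphi(Z^{*})\supseteq W$, this yields the Main Theorem.
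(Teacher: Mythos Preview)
Your overall architecture matches the paper's: follow the Sims--Yost/HWW construction of $Z=\overline{\bigcup_n M_n}$, replace the finite-dimensional engine by an almost isometric version, pass to a limit via Tychonoff compactness to obtain $\varphi\in\HB(Z,Y)$ with $\varphi(Z^*)\supset W$, and then verify $\varphi\in\HB_{ai}(Z,Y)$ via the characterization in Theorem~\ref{thm:aln2} (the paper uses the slightly more flexible Lemma~\ref{lem:ai-equi}, but this is a minor point).

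The substantive difference is in how you propose to obtain the lower estimate $(1-\eps)\|e\|\le\|Te\|$ in the finite-dimensional lemma. The paper's route is direct and much simpler than what you sketch: the lower bound is \emph{already present} in the proof of \cite[III.Lemma~4.2]{HWW}. The same net argument that yields $\|b+\lambda\frak u_\nu\|\le\|b+\lambda\frak u\|+5/M$ yields, by symmetry, $\|b+\lambda\frak u\|\le\|b+\lambda\frak u_\nu\|+5/M$; combined with the lower bound $\|b+\lambda\frak u\|\ge 1/(k\|I-P\|)$ coming from the Auerbach choice of $(u_\kappa)$, this gives the two-sided estimate with no extra ingredients. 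Lemma~\ref{lem-hww+} is thus a pure observation, not a new construction.

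Your proposed mechanism---forcing the lower bound via condition~(d3) and norming functionals in the range of the partial extension operator---is a genuinely different idea, but it runs into a coherence problem you do not fully address. The compactness argument requires a \emph{universal} lemma: given $B$, $k$, $\eps$, $C$, one must produce a single finite-dimensional $Z$ that works for \emph{every} $E\supset B$ with $\dim E/B\le k$. To get $\|Te\|\ge(1-\eps)\|e\|$ via $|z^*(Te)|=|\psi(z^*)(e)|$, you need $\psi(M)$ to $(1-\eps)$-norm every such $E$, and a finite $M$ cannot do this. One could try to abandon the universal lemma and instead handle each $E$ with its own norming data, but then the directed-set/Tychonoff step has to be reorganized, and keeping the dual side finite-dimensional at each stage becomes delicate. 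This may be salvageable, but it is considerably more work than the paper's observation that the lower bound was there all along.
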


 So every separable subspace of a Banach space is contained in a
 separable ai-ideal. Thus ai-ideals seems to occur just as
 frequently as ideals. Nevertheless, being an ai-ideal is strictly stronger
 than being an ideal. This was proved in \cite[Example~1]{ALN2} but
 can e.g. also be seen from Theorem
 \ref{thm:lindenstrauss-gurariy} and the two paragraphs that follow. Theorem
 \ref{thm:lindenstrauss-gurariy} is a collection of
 \cite[Proposition~3.4]{Fak4} and \cite[Theorem~4.3]{ALN2}.

\begin{thm}\label{thm:lindenstrauss-gurariy} For a Banach space $X$ the
  following statements are equivalent:
\begin{itemize}
  \item [(i)] $X$ is a Lindenstrauss (resp. Gurari{\u\i}) space.
  \item [(ii)] $X$ is an ideal (resp. ai-ideal) in every superspace.
\end{itemize}
\end{thm}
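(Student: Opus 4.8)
The plan is to prove both equivalences simultaneously, using the local descriptions of ideals and ai-ideals supplied by Theorem \ref{thm:opfk} and Theorem \ref{thm:aln2} together with the classical finite dimensional characterizations of the two classes of spaces. I would recall that $X$ is a Lindenstrauss space if and only if it has the $(1+\eps)$-extension property: for every $\eps>0$, every pair of finite dimensional spaces $E\subseteq F$, and every $u\in\mathcal{L}(E,X)$ there is an extension $\tilde u\colon F\to X$ with $\|\tilde u\|\le(1+\eps)\|u\|$; and that $X$ is a Gurari{\u\i} space if and only if it has the corresponding almost universal disposition property, namely that every isometric embedding $u\colon E\to X$ of a finite dimensional space admits, for each $\eps>0$ and each finite dimensional $F\supseteq E$, an extension $\tilde u\colon F\to X$ that is an almost isometry, $(1-\eps)\|f\|\le\|\tilde u f\|\le(1+\eps)\|f\|$ for $f\in F$. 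By Theorem \ref{thm:opfk} (resp.\ Theorem \ref{thm:aln2}), being an ideal (resp.\ ai-ideal) of $X$ in a superspace $Y$ is equivalent to the matching local complementation property of $X$ in $Y$.

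For (i)$\Rightarrow$(ii) I would argue directly. Given a superspace $Y\supseteq X$, an $\eps>0$, and a finite dimensional $E\subseteq Y$, I would consider the isometric inclusion $\iota\colon E\cap X\hookrightarrow X$ and apply the extension property (resp.\ almost universal disposition) to $\iota$ and the overspace $E\cap X\subseteq E$. This produces $T\colon E\to X$ with $Te=e$ for $e\in E\cap X$ and $\|T\|\le1+\eps$ (resp.\ $T$ an almost isometry). That is precisely condition (d1) together with (d2) (resp.\ (d2')), so $X$ is an ideal (resp.\ ai-ideal) in $Y$, and since $Y$ was arbitrary we obtain (ii).

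For (ii)$\Rightarrow$(i) the idea is to realize an abstract finite dimensional extension problem inside a genuine superspace of $X$. Given $u\colon E\to X$ with $\|u\|\le1$ and $E\subseteq F$, I would form the $\ell_1$-pushout $Y=(X\oplus_1 F)/V$, where $V=\{(ue,-e):e\in E\}$. Using $\|u\|\le1$ one checks that $x\mapsto[(x,0)]$ is an isometric embedding of $X$ into $Y$, that $f\mapsto[(0,f)]$ is a contraction of $F$ into $Y$, and that $[(0,e)]=[(ue,0)]$ for $e\in E$; thus, identifying $X$ with its image, the image $\bar F$ of $F$ is a finite dimensional subspace of $Y$ with $u(E)\subseteq\bar F\cap X$. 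Invoking the hypothesis that $X$ is an ideal in $Y$, local $1$-complementation applied to $\bar F$ gives $S\colon\bar F\to X$ fixing $\bar F\cap X$ with $\|S\|\le1+\eps$, and then $\tilde u(f):=S([(0,f)])$ extends $u$ (because $S$ fixes $u(E)$) and satisfies $\|\tilde u\|\le1+\eps$. Hence $X$ has the $(1+\eps)$-extension property and is a Lindenstrauss space. In the Gurari{\u\i} case I would take $u$ to be an isometric embedding; then the triangle inequality upgrades $f\mapsto[(0,f)]$ to an isometry of $F$ into $Y$, and the ai-ideal hypothesis furnishes $S$ as an almost isometry, so that $\tilde u$ is an almost isometric extension, giving almost universal disposition.

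The hard part will be the construction and analysis of the pushout $Y$ in the direction (ii)$\Rightarrow$(i): the gluing must be arranged so that $X$ sits isometrically and $F$ sits contractively (and isometrically when $u$ is an isometry), since only under this control does the local complement $S$, whose norm is governed by the hypothesis, transfer back to a norm-controlled extension of $u$. Once the embedding estimates for the pushout are in hand, the remaining ingredients — the dictionary between ideals/ai-ideals and local complementation from Theorems \ref{thm:opfk} and \ref{thm:aln2}, and the classical finite dimensional characterizations of Lindenstrauss and Gurari{\u\i} spaces — are applied essentially verbatim.
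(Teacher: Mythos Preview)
The paper does not supply its own proof of this theorem; it is recorded there as a compilation of \cite[Proposition~3.4]{Fak4} and \cite[Theorem~4.3]{ALN2}, so there is no in-paper argument to compare against. Your proposal is correct and is in fact the standard route: the implication (i)$\Rightarrow$(ii) is immediate from the finite-dimensional characterizations via the inclusion $E\cap X\hookrightarrow X$, and for (ii)$\Rightarrow$(i) your $\ell_1$-pushout $Y=(X\oplus_1 F)/\{(ue,-e):e\in E\}$ does exactly what is needed---the computations $\inf_{e\in E}(\|x-ue\|+\|e\|)=\|x\|$ when $\|u\|\le1$, and $\inf_{e\in E}(\|ue\|+\|f-e\|)=\|f\|$ when $u$ is isometric, are routine and give the required isometric embeddings, after which the local (almost-isometric) complementation of $X$ in $Y$ transports back along $f\mapsto[(0,f)]$ to an extension of $u$ with the right norm control. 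One cosmetic remark: for the Lindenstrauss half you invoke the classical equivalence ``$X^*\cong L_1(\mu)$ $\Leftrightarrow$ $(1+\eps)$-extension property for finite-dimensional pairs'', which is not stated in the present paper; it is precisely Lindenstrauss's memoir result \cite{Lin-Mem}, so a citation there would close the loop.
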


Recall that a \emph{Lindenstrauss space} is a Banach space with a dual
isometric to $L_1(\mu)$ for some positive measure $\mu$. A Banach
space $X$ is called a \emph{Gurari{\u\i} space} if it has the property that
whenever $\eps>0$, $E$ is a finite-dimensional Banach space, $T_E:E\to
X$ is isometric and $F$ is a finite-dimensional Banach space with $E
\subset F$, then there exists a linear operator $T_F:F\to X$ such that
\begin{itemize}
  \item [(i)] $T_F(f)=T_E(f)$ for all $f\in E$, and
  \item [(ii)] $(1-\eps)\|f\|\leq\|T_F f\|\leq (1+\eps)\|f\|$ for
    all $f\in F$.
\end{itemize}

From Theorem \ref{thm:lindenstrauss-gurariy} we see that the class of
Gurari{\u\i} spaces is a subclass of the class of Lindenstrauss
spaces. In \cite{MR0200697} Gurari{\u\i} showed that this subclass is non-empty as he constructed the first separable
Gurari{\u\i} space. Later Lusky \cite{MR0433198} proved that all
separable Gurari{\u\i} spaces are in fact
linearly isometric. Also non-separable Gurari{\u\i} spaces exist (see
e.g. \cite{MR2977626}). But, no
Gurari{\u\i} space is a dual space as e.g. the unit ball of such a
space contains no extreme points
\cite[Proposition~3.3]{ALLN}. The bidual of a Lindenstrauss
space is, however, again a Lindenstrauss space \cite{Lin-Mem}. Thus we see
that the classes of separable and non-separable Gurari{\u\i} spaces
are non-empty proper subclasses of respectively the classes of
separable and non-separable Lindenstrauss spaces.   

 Let us now relate the notion of an ai-ideal to the well established
 notion of a strict ideal (see e.g. \cite{GKS}, \cite{MR2559177}, 
 \cite{MR1840956}, and \cite{A2}). We say that $X$ is a \emph{strict ideal} in
 $Y$ if $X$ is an ideal in $Y$ with an associated $\varphi \in
 \HB(X,Y)$ whose range is 1-norming for $Y$, i.e. for every $y \in Y$
 we have $\|y\| = \sup\{y^*(y): y^* \in \varphi(X^*) \cap S_{Y^*}\}$
 where $S_{Y^*}$ is the unit sphere of $Y^*$.
Let $\HB_{s}(X,Y) = \{\varphi \in \HB(X,Y): \varphi \text{ is
   strict}\}.$ Using the PLR it is straightforward to show that every
 strict ideal is an ai-ideal. However, the converse it not true (see
 e.g. \cite[Example~1]{ALN2} and \cite[Remark~3.2]{ALLN}). We can sum
 up the last paragraphs by 
\[\HB(X,Y) \supset \HB_{ai}(X,Y) \supset \HB_{s}(X,Y),\] 
where the containment may be proper.    
     
The paper is organized as follows: In Section \ref{sec2} we give a proof
of Theorem \ref{thm:abr}. In Section \ref{sec3} we use this theorem
to obtain characterizations of diameter 2 properties, the Daugavet
property as well as the properties of being an almost square space and
an octahedral space. 

We will consider real Banach spaces only (though many of the results
are true in the complex case as well).  The notation used
is mostly standard and is, if considered necessary, explained as the text proceeds. 
   
\section{The main theorem}\label{sec2}
The proof of Theorem \ref{thm:abr} depends on Lemma \ref{lem-hww+}
below. The roots of this lemma goes back to \cite{Lin} and \cite[III.Lemma~4.2]{HWW}. Lemma \ref{lem-hww+}  differs from
\cite[III.Lemma~4.2]{HWW} simply by the fact that the partial conclusion
\begin{enumerate}
  \item[ii)] $\|Tx\| \le (1 + \eps)\|x\|$ for every $x \in E$, 
\end{enumerate} 
in \cite[III.Lemma~4.2]{HWW} is replaced by the stronger
partial conclusion ii') in Lemma \ref{lem-hww+}. The proof of Lemma
\ref{lem-hww+} is interestingly enough already contained in the
proof of Lemma \cite[III.Lemma~4.2]{HWW}. This is perhaps not so easy
to spot at first glance. So to make this clearer, we present a
complete proof here. 

\begin{lem}\label{lem-hww+}
  Let $Y$ be a Banach space, $B$ a finite dimensional subspace of $Y$, $k
  \in \mathbb N$, $\eps > 0$, and $C$ a finite subset in $Y^*$. Then
  there is a finite dimensional subspace $Z$ containing $B$ such
  that for every subspace $E$ of $Y$ containing $B$ and satisfying
  $\dim E/B \le k$ one can find a linear operator $T:E \to Z$ such that
  \begin{enumerate}
    \item[i)]$Ty = y$ for every $y \in B$,
    \item[ii')] $(1 - \eps)\|y\| \le \|Ty\| \le (1 + \eps)\|y\|$ for
      every $y \in E$,
    \item[iii)]$|f(Ty) - f(y)| \le \eps \|y\|$ for every $y \in E$ and
      $f \in C$. 
  \end{enumerate}
\end{lem}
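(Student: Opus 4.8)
The plan is to run the Lindenstrauss-style compactness argument that lies at the heart of \cite[III.Lemma~4.2]{HWW}, with the single observation that the approximating operator it produces is automatically a \emph{two-sided} near-isometry; thus ii$'$) costs nothing beyond what already yields ii). Write $n=\dim B$, fix a basis $b_1,\dots,b_n$ of $B$, and stratify the relevant subspaces by $j:=\dim E/B\in\{0,1,\dots,k\}$. For $E\supseteq B$ with $\dim E/B=j$ choose a ``greedy'' basis of $E/B$: having picked $y_1,\dots,y_{l-1}\in E$, take any vector of $E$ outside $M_{l-1}:=B+\spann\{y_1,\dots,y_{l-1}\}$, subtract its nearest point of the finite dimensional space $M_{l-1}$, and normalize; the resulting $y_l$ satisfies $\|y_l\|=\dist(y_l,M_{l-1})=1$, and after $j$ steps $b_1,\dots,b_n,y_1,\dots,y_j$ is a basis of $E$. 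To $E$ (with this basis) associate its \emph{type}: the norm $\nu_E$ on $\R^{n+j}$ given by $\nu_E(a,c)=\bigl\|\sum_i a_i b_i+\sum_l c_l y_l\bigr\|$, together with the array $\bigl(f(y_l)\bigr)_{f\in C,\,1\le l\le j}$.

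Fix a stratum $j$. Every such $\nu_E$ agrees with the norm of $B$ on $\R^n\times\{0\}$ and satisfies $c_0\|(a,c)\|_1\le\nu_E(a,c)\le C_0\|(a,c)\|_1$ with $C_0=\max(1,\max_i\|b_i\|)$ and $c_0=c_0(B,j)>0$; the upper bound is the triangle inequality, and the lower bound follows by a short induction on $j$ from the greedy normalization (if $|c_j|$ is not too small then $\nu_E(a,c)\ge\dist\bigl(\textstyle\sum_i a_ib_i+\sum_l c_l y_l,\,M_{j-1}\bigr)=|c_j|$, and otherwise one drops the last coordinate and applies the inductive bound). Hence the norms $\nu_E$ are uniformly $C_0$-Lipschitz and bounded below by $c_0$ on the $\ell_1$-unit sphere of $\R^{n+j}$, so by the Arzel\`a--Ascoli theorem the set of types realized in stratum $j$ is totally bounded (the scalar arrays $\bigl(f(y_l)\bigr)$ vary in the compact box $\prod_{f\in C}[-\|f\|,\|f\|]^{j}$). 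Given $\delta>0$, pick finitely many subspaces $E_1^{(j)},\dots,E_{N_j}^{(j)}$ of $Y$, each containing $B$, of stratum $j$, and equipped with a greedy basis, whose types form a $\delta$-net of the realized ones, and set
\[
Z\;=\;B+\sum_{j=0}^{k}\sum_{i=1}^{N_j}E_i^{(j)},
\]
a finite dimensional subspace of $Y$ containing $B$.

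Given an arbitrary $E\supseteq B$ with $\dim E/B\le k$, say of stratum $j$ with greedy basis $b_1,\dots,b_n,y_1,\dots,y_j$, pick $i$ with the type of $E$ within $\delta$ of the type of $E_i^{(j)}$, and define $T\colon E\to Z$ by $Tb_l=b_l$ and $Ty_l=y_l^{(j,i)}$ (the $l$-th greedy basis vector of $E_i^{(j)}$). This is a well-defined linear bijection of $E$ onto $E_i^{(j)}$, because the stratification guarantees honest bases on both sides, so there is no clash of linear-dependence relations. For $x=\sum a_i b_i+\sum c_l y_l\in E$ one has $\|x\|=\nu_E(a,c)$ and $\|Tx\|=\nu_{E_i^{(j)}}(a,c)$; dividing the bound $|\nu_E-\nu_{E_i^{(j)}}|\le\delta$ on the $\ell_1$-unit sphere by $\nu_E\ge c_0$ there gives $\bigl|\,\|Tx\|-\|x\|\,\bigr|\le(\delta/c_0)\|x\|$, hence $(1-\eps)\|x\|\le\|Tx\|\le(1+\eps)\|x\|$ once $\delta\le\eps c_0$, which is ii$'$). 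Also $f(Tx)-f(x)=\sum_l c_l\bigl(f(y_l^{(j,i)})-f(y_l)\bigr)$, so $|f(Tx)-f(x)|\le\delta\sum_l|c_l|\le(\delta/c_0)\|x\|\le\eps\|x\|$, which is iii), and i) holds by construction.

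The step I expect to be the main obstacle is the quantitative core just sketched: arranging that the induced norms $\nu_E$ form an equicontinuous and \emph{uniformly non-degenerate} family on a fixed finite dimensional coordinate space — it is the greedy normalization, and the resulting lower bound $\nu_E\ge c_0(B,j)\|\cdot\|_1$, that make both the compactness and the \emph{multiplicative} (rather than merely additive) estimate in ii$'$) go through, and one has to keep the dependence of $c_0$ on $B$ and $j$ only. With that in hand the compactness and the coordinate-matching are routine, and the only new point relative to \cite[III.Lemma~4.2]{HWW} is the remark that the matching operator $T$ also satisfies the lower estimate $\|Tx\|\ge(1-\eps)\|x\|$, not just the upper one ii).
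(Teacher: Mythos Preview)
Your proof is correct and shares the overall Lindenstrauss ``type--compactness'' strategy with the paper, but the implementation is genuinely different. The paper fixes once and for all a projection $P$ of $Y$ onto $B$, writes the complementary part of $E$ via an Auerbach basis inside $\ker P$, and encodes the type of $E$ by sampling the norm $\|b_\rho+\lambda_\sigma\frak u\|$ on a finite grid together with the functional values $f_\mu(u_\kappa)$; the two-sided estimate ii$'$) then emerges from a case split ($\|b\|\le M$ versus $\|b\|>M$), the key quantitative input being that the Auerbach normalization forces $\|b+\lambda\frak u\|\ge 1/(k\|I-P\|)$ whenever $\|\lambda\|_{\ell_1(k)}=1$. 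Your greedy (distance-one) basis replaces both the global projection and the Auerbach lemma simultaneously, and the resulting uniform lower bound $\nu_E\ge c_0(B,j)\|\cdot\|_1$ converts the additive error of the type-matching directly into a multiplicative one, with Arzel\`a--Ascoli standing in for the explicit $1/M$-nets. Your version is tidier (no threshold $M$, no case analysis, no projection constant $\|I-P\|$ to track), while the paper's is slightly more elementary and more explicitly constructive. Both arguments hinge on exactly the point you single out at the end: once the complementary coordinates are normalized so that $\|y\|$ is uniformly bounded below, the lower estimate $\|Ty\|\ge(1-\eps)\|y\|$ comes for free alongside the upper one already present in \cite[III.Lemma~4.2]{HWW}.
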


\begin{proof}
  Choose $\delta > 0$ such that $\delta < \eps$ and $(1 + \delta)^{-1}
  > 1 - \eps$. Let $C = \{f_1, \ldots, f_m\} \subset Y^*$ and $P$ a
  projection   on $Y$ onto $B$. Put $U = \ker P$, the kernel of $P$. Then we can write $Y  = B \oplus U$. Choose $M$ so large that
  \[M > \frac{5k\|I - P\|}{\delta} \text{ and } \frac{M + 1}{M - 1} < 1
  + \delta.\]
  Let $(b_\rho)_{\rho \le r}$ and $(\lambda_\sigma)_{\sigma \le s}$ be
  finite $1/M$-nets for $\{b \in B: \|b\| \le M\}$ and $S_{\ell_1(k)}$
  respectively ($\ell_1(k)$ denotes the $k$-dimensional $\ell_1$
  space). Let $B_U$ be the unit ball of $U$ and define $\phi:(B_U)^k \to \mathbb R^{rs} \times \mathbb
  R^{mk} = \mathbb R^{rs + mk}$, by   \[\phi(u_1, \ldots, u_k) =
  \left( (\|b_\rho + \sum_{\kappa = 1}^k   \lambda_{\kappa,
      \sigma}u_\kappa\|)_{\rho \le r, \sigma \le s},
    (f_\mu(u_\kappa))_{\mu \le m, \kappa \le k }\right).\] Since
  $\phi(B_U)^k$ is totally bounded, we can find $(\frak u_\nu)_{\nu
    \le n} \subset (B_U)^k$ such that $(\phi \frak u_\nu)_{\nu \le n}$
  is a   finite $1/M$-net for $\phi(B_U)^k$ where
  we may take any norm om
  $\mathbb R^{rs + mk}$ for which the coefficient functionals have
  norm $\le 1$. Put 
  \[Z = B \oplus \spann\{u_{\kappa, \nu}: \kappa \le k, \nu \le n\}.\]
  Now, given a subspace $E \supset B$ with $\dim E/B = k$, there are
  $u_1, \ldots, u_k \in U$ such that $E = B \oplus \spann\{u_\kappa: \kappa \le
  k\}$. By Auerbach's lemma we can choose $\frak u = (u_1, \ldots,
  u_k)$ such that
  \begin{equation}
    \label{eq:3}
    \|u_\kappa\| = 1, 1 \le \kappa \le k \text{ and } \|\sum_{\kappa =
    1}^k \lambda_\kappa u_\kappa\|\ge \frac{1}{k}\sum_{\kappa = 1}^k |\lambda_\kappa|
  \text{ for all } (\lambda_\kappa) \in \mathbb R^k.
  \end{equation}
  Indeed, find $u^*_\kappa \in S_{\spann\{u_\kappa:1, \ldots k\}^*}$ with
  $u^*_{\kappa} (u_j) = 0$ if $\kappa \not=j$ and
  $\sign(\lambda_{\kappa})$ otherwise. Then the norm of
  $u^*=1/k\sum_{\kappa=1}^k u^*_{\kappa}$ is $\le 1$ and we have
  \begin{align*}
    \|\sum_{\kappa = 1}^k\lambda_\kappa u_\kappa\| & \ge
                                                     u^*(\sum_{\kappa
                                                     = 1}^k
                                                     \lambda_\kappa u_\kappa) =
    \frac{1}{k}\sum_{\kappa = 1}^k |\lambda_\kappa|.  
  \end{align*}
     
  This means that there is $\nu \le n$ such that 
  \[\|\phi \frak u - \phi \frak u_\nu\| < \frac{1}{M},\]
  i.e. with $\lambda \frak u = \sum_{\kappa = 1}^k \lambda_\kappa
  u_\kappa$ where $\lambda = (\lambda_1, \ldots, \lambda_k) \in
  \mathbb R^k$ we have 
  \begin{equation}
   \label{eq:4}
    \bigg| \|b_\rho + \lambda_\sigma\frak u\| - \|b_\rho +
    \lambda_\sigma \frak u_\nu\| \bigg| < \frac{1}{M} \text{ for all } \rho
  \le r, \sigma \le s,
  \end{equation}
  and 
  \begin{equation}
    \label{eq:5} 
    \bigg|f_\mu(u_\kappa) - f_\mu(u_{\kappa, \nu})\bigg| <
    \frac{1}{M} \text{ for all } \mu \le m, \kappa \le k.
  \end{equation} 
  Now, define $T: E \to Z$ by 
  \[T(b + \lambda \frak u) = b + \lambda
  \frak u_\nu.\]
  Clearly $T$ is the identity on $B$. To show $(1 -
  \eps)\|y\| \le \|Ty\| \le (1 + \eps)\|y\|$ for all $y \in E$, it
  suffices to prove 
  \begin{equation}
    \label{eq:6}
    (1 - \eps)\|b + \lambda \frak u\| \le \|b +
    \lambda \frak u_\nu\| \le (1 + \eps) \|b + \lambda\frak u\|  \text{ for
    } \|\lambda\|_{\ell_1(k)} = 1.
  \end{equation}
   
  To this end assume first $\|b\| \le M$. Then $\|b - b_\rho\| < \frac{1}{M}$ for
  some $\rho$ and $\|\lambda - \lambda_\sigma\| < \frac{1}{M}$ for
  some $\sigma$. Thus
  \begin{align*}
    \|b + \lambda \frak u_\nu\|& \le \|b_p + \lambda_\sigma \frak u_\nu\| + \|b 
    - b_\sigma\| + \|\lambda \frak u_\nu - \lambda_\sigma \frak u_\nu \|\\
                        & < \|b_p + \lambda_\sigma \frak u_\nu\| +
                        \frac{2}{M}\\
                        &  \overset{(\ref{eq:4})}{<}  \|b_\rho +
                        \lambda_\sigma \frak u\| + \frac{3}{M}\\
                        & \le \|b + \lambda \frak u\| +  \|b_\rho - b\| +
                        \|\lambda_\sigma \frak u - \lambda \frak u \| +
                        \frac{3}{M}\\
                        &< \|b + \lambda \frak u\| + \frac{5}{M}.
  \end{align*}
 Similarly we also get $\|b + \lambda \frak u\| < \|b + \lambda \frak u_\nu\|
 + \frac{5}{M}$, so we have 
 \[\|b + \lambda \frak u\| - \frac{5}{M}< \|b + \lambda \frak u_\nu\| < \|b +
 \lambda \frak u\| + \frac{5}{M}.\]
 Also
 \begin{align*}
   \|b + \lambda \frak u\| &\ge  \frac{1}{\|I - P\|}\|(I - P)(b +
 \lambda \frak u)\|\\
                           & = \frac{1}{\|I - P\|}\| \sum_{\kappa =
                             1}^k \lambda_\kappa u_\kappa\|\\
                           & \overset{(\ref{eq:3})}{\ge} \frac{1}{k\|I
                             - P\|}\sum_{\kappa =
                             1}^k|\lambda_\kappa|\\
                           & = \frac{1}{k\|I
                             - P\|} > \frac{5}{\delta M},
 \end{align*}
 so $\eps \|b + \lambda \frak u\| \ge \frac{5}{M}$ and thus
 (\ref{eq:6}) holds for $\|b\| \le M$.

 For $\|b\| > M$ we have
 \[\|b\| - 1 \le \|b + \lambda \frak u_\nu\| \le \|b\| + 1 \text{ and
 } \|b\| - 1 \le \|b +
 \lambda \frak u\| \le \|b\| + 1,\]
 so both  
 \[\frac{\|b + \lambda \frak u\|}{\|b + \lambda \frak u_\nu\|} \text{
   and } \frac{\|b + \lambda \frak u_\nu\|}{\|b + \lambda \frak u\|}
 \text{ are }\le
 \frac{\|b\| + 1}{\|b\| - 1} < \frac{M + 1}{M - 1} < 1 + \delta,\]
 as $y \to \frac{y+1}{y-1}$ is a positive and decreasing function for
 $y > 1$. Thus (\ref{eq:6}) holds also for $\|b\| > M$. 

 Finally for any $y = b + \sum_{\kappa = 1}^k \lambda_\kappa u_\kappa
 \in E$ we have
 \begin{align*}
   |f_\mu(y) - f_\mu(Ty)| &= \bigg| f_\mu\bigg(\sum_{\kappa =
     1}^k\lambda_\kappa(u_\kappa - u_{\kappa, \nu}) \bigg)\bigg|\\
                         &\le \sum_{\kappa =
     1}^k|\lambda_\kappa||f_\mu(u_\kappa - u_{\kappa, \nu})|\\
                         &
                         \overset{(\ref{eq:5})}{\le}\frac{1}{M}\sum_{\kappa
                           =
     1}^k |\lambda_\kappa|\\
                         &\overset{(\ref{eq:3})}{\le}\frac{k}{M}\|\sum_{\kappa =
     1}^k \lambda_\kappa u_\kappa\|\\
                        & \le  \frac{k\|I-P\|}{M}\|y\| < \frac{\eps}{5}\|y\|. 
 \end{align*}
 \end{proof}

By the proof of \cite[Theorem \ref{thm:aln2}]{ALN2} the
following holds. 

\begin{lem}\label{lem:ai-equi}
  Let $X$ be an ideal in $Y$ and let $\varphi \in \HB(X,Y)$.
  Then the following statements are equivalent.
  \begin{itemize}
    \item[(a)]$\varphi \in \HB_{ai}(X,Y)$. 
    \item[(b)]For every $\delta, \eps > 0$, for every finite dimensional
  subspace $E$ of $Y$, and every finite dimensional subspace $F$ of
  $X^*$ there exists a linear operator $T:E \to X$ which satisfies
  \begin{itemize}
    \item[(d1')] $\|Te - e\| \le \eps\|e\|$ for every $e \in E \cap
      X$,
    \item[(d2')] in Definition \ref{def:ai-hbext}, and
    \item[(d3')] $|\varphi f(e) - f(Te)| < \delta\|e\|\cdot\|f\|$ for
      every $e \in E$, $f \in F$.
  \end{itemize} 
  \end{itemize}
\end{lem}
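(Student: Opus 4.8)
The plan is to prove the two implications separately. The implication $(a)\Rightarrow(b)$ is immediate: if $\varphi\in\HB_{ai}(X,Y)$, then by the very definition of $\HB_{ai}(X,Y)$ through Theorem~\ref{thm:aln2}, for every $\eps,\delta>0$ and all $E,F$ as in (b) there is $T\colon E\to X$ satisfying (d1) and (d3) of Theorem~\ref{thm:opfk} together with (d2') of Definition~\ref{def:ai-hbext}, and such a $T$ trivially satisfies (d1'), (d2') and (d3'). All the work is in $(b)\Rightarrow(a)$.

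For $(b)\Rightarrow(a)$, fix $\eps>0$, a finite dimensional subspace $E$ of $Y$ and a finite dimensional subspace $F$ of $X^*$; by the definition of $\HB_{ai}(X,Y)$ it suffices to produce $T\colon E\to X$ satisfying (d1), (d3) and (d2'). The strategy is to apply (b) with auxiliary parameters $\eps_1,\delta_1>0$ (chosen small at the very end) and the same $E,F$, to obtain $T_1\colon E\to X$ with the approximate properties, and then to correct $T_1$ in two stages so that (d1) and (d3) become exact, at the cost of only a small deterioration of (d2'). Set $E_0=E\cap X$, fix an algebraic complement $H$ of $E_0$ in $E$, and let $Q\colon E\to E_0$ be the corresponding (bounded) projection, with $C:=\|Q\|$ depending only on $E$. \emph{First correction.} Define $T_2\colon E\to X$ by $T_2(e_0+h)=e_0+T_1h$ for $e_0\in E_0$, $h\in H$ (this makes sense since $T_1h\in X$). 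Then $T_2$ is the identity on $E_0$, so (d1) holds for $T_2$; moreover $T_2y-T_1y=Qy-T_1(Qy)$, and $\|Qy-T_1(Qy)\|\le\eps_1\|Qy\|\le C\eps_1\|y\|$ by (d1'), so $\|T_2-T_1\|\le C\eps_1$, whence $T_2$ still satisfies (d2') and (d3') with constants tending to those of $T_1$ as $\eps_1\to0$.

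\emph{Second correction.} Choose a basis $f_1,\dots,f_m$ of $F$ and a basis $e_1,\dots,e_d$ of $E$ with $e_1,\dots,e_{d_0}$ a basis of $E_0$, and put $\eta_{ij}=\varphi f_i(e_j)-f_i(T_2e_j)$. Since $\varphi$ is a Hahn-Banach extension operator and $T_2$ is the identity on $E_0$, we have $\eta_{ij}=0$ for $j\le d_0$, while $|\eta_{ij}|$ is small for $j>d_0$ (bounded by a constant depending on $E,F$ times $\delta_1+C\eps_1$, using (d3') for $T_1$ and $\|T_2-T_1\|\le C\eps_1$). Now $f_1,\dots,f_m$ are linearly independent in $X^*$, so the evaluation map $x\mapsto(f_1(x),\dots,f_m(x))$ from $X$ onto $\R^m$ is surjective, hence open; thus for each $j>d_0$ we may pick $x_j\in X$ with $f_i(x_j)=\eta_{ij}$ for all $i$ and $\|x_j\|\le K\max_i|\eta_{ij}|$, where $K$ depends only on $F$. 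Setting $x_j=0$ for $j\le d_0$, define $\Delta\colon E\to X$ by $\Delta e_j=x_j$ and put $T=T_2+\Delta$. Then $T$ is the identity on $E_0$, so (d1) holds; $f_i(Te_j)=f_i(T_2e_j)+\eta_{ij}=\varphi f_i(e_j)$ for all $i,j$, so by linearity $f(Te)=\varphi f(e)$ for every $e\in E$ and $f\in F$, i.e. (d3) holds; and $\|\Delta\|$ is small (bounded by a constant depending on the fixed basis of $E$ times $K\max_i|\eta_{ij}|$), with $\|\Delta\|\to0$ as $\eps_1,\delta_1\to0$. Choosing $\eps_1,\delta_1$ small enough that $(C+1)\eps_1+\|\Delta\|<\eps$ then gives (d2') for $T$. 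This proves $(b)\Rightarrow(a)$; that $X$ is an ai-ideal is then automatic from Theorem~\ref{thm:aln2}.

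The finite dimensional bookkeeping (existence of the complement $H$, boundedness of $Q$, the norm estimates for the induced operators) is routine, and the first correction is a standard finite dimensional perturbation. The only step that uses the ambient Banach space structure, and the one I expect to be the real obstacle, is the second correction: one needs that a small perturbation of the values $f_i(\cdot)$ can be absorbed by a correspondingly small element of $X$, which is precisely the open mapping theorem applied to the surjection $X\to\R^{\dim F}$ arising from the linear independence in $X^*$ of a basis of $F$. Everything else reduces to choosing the auxiliary parameters $\eps_1,\delta_1$ last.
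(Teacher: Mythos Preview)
Your proof is correct. The two-stage perturbation---first forcing $T$ to be the identity on $E\cap X$ via a complementary projection, then repairing (d3') to (d3) by solving a small linear system in $X$ via the open mapping theorem---is the standard route for upgrading approximate local-reflexivity-type data to exact data, and all the estimates you give are sound.

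As for comparison: the paper does not actually prove this lemma but simply records that it follows from the proof of Theorem~\ref{thm:aln2} in \cite{ALN2}. The argument there is of the same flavour as yours (a perturbation of an approximate operator to obtain exact conditions (d1) and (d3) while controlling (d2')); your write-up makes the mechanism explicit and self-contained, which is a gain in readability. One cosmetic remark: in your final inequality you want $(1+C)\eps_1+\|\Delta\|<\eps$ to ensure (d2'); note that $\|\Delta\|$ also depends on $\eps_1,\delta_1$ through the $\eta_{ij}$, but since the constants $C$, $K$ and the basis constants depend only on $E$ and $F$, and $\|\Delta\|\to 0$ as $\eps_1,\delta_1\to 0$, this causes no circularity.
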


Along with Lemma \ref{lem-hww+} we will use Lemma \ref{lem:ai-equi}
to prove our Main Theorem.

\begin{proof}[Proof of Theorem \ref{thm:abr}]
  The first part of the proof is identical to that of
  \cite[III.Lemma~4.3]{HWW} except at the crucial point where we use
  Lemma \ref{lem-hww+} in place of \cite[III.Lemma~4.2]{HWW}. The
  reward for this is that we are able to prove the existence of a
  Hahn-Banach extension operator $\varphi$ for which statement (b)
  in Lemma \ref{lem:ai-equi} holds. 
 
  Let $(x_n)$ be a sequence dense in $X$ and $(f_n)$ a sequence dense
  in $W$. Starting with $M_1 =\{0\}$ we inductively define subspaces
  $M_n$ as follows: Put $B_n = \spann(M_n, x_n)$, $C_n = \{f_1,
  \ldots, f_n\}$, and let $M_{n+1}$ be the subspace $Z$ given by Lemma
  \ref{lem-hww+} when $B = B_n, k = n, \eps = \frac{1}{n}$, and $C =
  C_n$. Without loss of generality assume $\dim M_{n+1}/B_n \ge n +
  1$. Clearly $M   = \overline{\cup M_n}$ is separable and contains
  $X$. For $n \in \mathbb N$ define
  \[I_n = \{E \subset Y: B_n \subset E, \dim E/B_n \le n\}\]
  and put \[I = \cup I_n.\]
  Since 
  \begin{equation}
    \label{eq:7}
     E \in I_n, F \in I_m \Rightarrow E \oplus F \oplus B_{\dim E +
    \dim F} \in I_{\dim E + \dim F}, 
  \end{equation}
  we have that $I$ is a directed set. Moreover, it is clear that every
  finite dimensional subspace $F$ of $Y$ is
  contained in some $E \in I$. Just take $E=F \oplus B_{\dim F}$. Then $E
  \in I_{\dim F}$. 
  
  Note that the condition $\dim M_{n+1}/B_n \ge n+1$
  implies $\dim B_{n+1}/B_n \ge n+1$. This easily gives that for each
  $E \in I$ there is a unique $n \in \mathbb N$ such that $E \in
  I_n$. So by Lemma \ref{lem-hww+} there exists a linear operator
  $T_E: E \to M_{n+1} \subset M$ such that $T_E|_{B_n} = I_{B_n}$,
  $(1-\frac{1}{n})\|y\|\le\|T_Ey\|  \le (1 +  \frac{1}{n})\|y\|$, and
  $|f_i(T_Ey) - f_i(y)| <  \frac{1}{n}\|y\|\cdot\|f_i\|$ for every $y
  \in E$ and $1 \le i \le n$. Extend $T_E$ (nonlinearly) to $Y$ by
  setting $S_E(y) = T_E(y)$ if $y \in E$
  and $S_E(y) = 0$ otherwise. Since $\|S_E(y)\| \le 2\|y\|$ and
  regarding $S_E(y)$ as an element in $M^{**}$ we can consider 
  \[(S_E)_{E \in I} \subset \Pi_{y \in Y} B_{M^{**}}(0, 2\|y\|).\]
  By Tychonoff's compactness theorem the net $(S_E)_{E \in I}$ has a
  convergent subnet $(S_{E_F})_{F \in J}$ in the product weak$^*$
  topology with limit $S$ say. This  means that for every finite
  number of points $(y_j)_{j \le K}   \subset Y$ we have $S_E y_j \to
  Sy_j$ with respect to the weak$^*$ topology on   $M^{**}$. Using
  this it is easy to see that the mapping $S: Y \to M^{**}$ is linear,
  of norm $1$, and the identity on $M$. Now, if we define  $\varphi:
  M^* \to Y^*$ by  \[\varphi m^*(y) = m^*(Sy),\] it is straightforward
  to check that $\varphi \in \HB(M,Y)$ with $\varphi(M^*) \supset W$.  

  Finally we check that condition (b) in Lemma
  \ref{lem:ai-equi} holds for $\varphi$. To
  this end let $H$ be a finite dimensional subspace of $Y$ and $G$ a
  finite dimensional subspace of $M^*$. Let $(g_l)_{l = 1}^m$ be
  a $\delta$-net for $S_G$ and choose $n$ so big that $S_{B_n}$ contains  
  a $\delta$-net $(h_p)_{p = 1}^q$ for $S_{H  \cap M}$ where
  $(h_p)_{p = 1}^r$, $r \ge q$, is a $\delta$-net for $S_H$. Then choose $H'
  \in J$ with $E_{H'} \supset \spann(H, B_n)$ such that the linear operator $T_{E_H'}:
  E_{H'} \to M$ satisfies $|\varphi g_l(h_p) - g_l(T_{E_H'}h_p)| < \delta$ for every $l \le
  m, p \le r$. Note that $T_{E_H'}|_{B_N} = I_{B_N}$ and
  $(1-\frac{1}{N})\|h\|\le\|T_{E_{H'}}h\|  \le (1 +
  \frac{1}{N})\|h\|$ for every $h \in E_{H'}$
  where $N > n$ is the unique number such that $E_{H'} \in I_N$. Now,
  for $h \in S_{H \cap M}$ we can find  $h_{p'} \in  (h_p)_{p=1}^q$
  such that $\|h - h_{p'}\| < \delta$. Thus we get  
  \begin{align*}
    \|T_{E_{H'}}h - h\| &\le \|T_{E_{H'}}h - T_{E_{H'}}h_{p'}\| +
                          \|T_{E_{H'}}h_{p'} - h_{p'}\| + \|h_{p'} - h\| \\ 
                        & \le (2 + 1/N)\delta.      
  \end{align*}
  
  For $h \in S_H$, $g \in S_G$ find $h_{p^{''}} \in (h_p)_{p=1}^r$ and
  $g_{l'} \in (g_l)_{l=1}^m$ with $\|h - h_{p^{''}}\| < \delta$ and
  $\|g - g_{l'}\| < \delta$. We get 
  \begin{align*}
    |\varphi g(h) - g(T_{E_{H'}}h)| 
   & \le |\varphi g(h) - \varphi
    g_{l'}(h)| + |\varphi g_{l'}(h) - \varphi g_{l'}(h_{p^{''}})| \\                                         
   & \hskip 2mm + |\varphi g_{l'}(h_{p^{''}}) -
     g_{l'}(T_{E_{H'}}h_{p^{''}})| + |g_{l'}(T_{E_{H'}}h_{p^{''}}) -
     g_{l'}(T_{E_{H'}}h)|\\                          
   & \hskip 2mm + |g_{l'}(T_{E_{H'}}h) - g(T_{E_{H'}}h)|\\ 
   & \le \|g - g_{l'}\| + \|h - h_{p^{''}}\| + \delta \\ & \hskip 2mm+
                                                           \|T_{E_{H'}}\|(\|h_{p^{''}}
                                                           - h\| + \|g_{l'} - g\|)\\
   & \le \delta(5 + 2/N).  
  \end{align*}
Now, as $\delta$ can be chosen arbitrary small, the operator
$T_{E_{H'}}$ restricted to $H$ will do the work. 
\end{proof}

\begin{rem}
  Note that Theorem \ref{thm:abr} can by transfinite induction, just as
  \cite[III.Lemma~4.3]{HWW}, be extended to a
  non-separable version similar to
  \cite[III.Lemma~4.4]{HWW} but with an almost isometric Hahn-Banach
  extension operator in place of a Hahn-Banach extension operator. The
  proof is like that of  \cite[III.Lemma~4.4]{HWW}, but uses Theorem
  \ref{thm:abr} instead of \cite[III.Lemma~4.3]{HWW}, and with a final
  part similar to the last part of the proof of Theorem \ref{thm:abr}.  
\end{rem}

As pointed out in Section \ref{sec1} examples of ideals which
are not ai-ideals are plentiful. Similarly examples of ai-ideals which are not
strict ideals are also plentiful. Indeed, take any non-separable space $Y$
for which $Y^*$ contains no proper 1-norming subspace (e.g. the
case for spaces being M-ideals in their biduals \cite{GoSa} or more
generally for strict u-ideals in their biduals \cite{GKS}). Then for every
separable subspace $X$ of $Y$ there exists a separable ai-ideal $Z$ in
$Y$ containing $X$. This ai-ideal cannot be strict. Moreover, this reasoning
actually shows that one cannot extend the main theorem replacing
``ai-ideal'' with ``strict ideal''.

\section{Characterizations in terms of subspaces}\label{sec3}
Let $Y$ be a Banach space with unit ball $B_Y$. By a \emph{slice} of $B_Y$ we
mean a set $S(y^*, \eps)=\{y \in B_Y: y^*(y)>1-\eps\}$ where $y^*$ is
in the unit sphere $S_{Y^*}$ of $Y^*$ and $\eps > 0$. A \emph{finite
  convex combination of slices} of $B_Y$
is a set of the form \[S=\sum_{i=1}^{n}\lambda_i
S(y^*_i,\eps_i)\]
where $\lambda_i \ge 0, \sum_{i=1}^{n}\lambda_i=1$, $y^*_i \in
S_{Y^*}$, and $\eps_i > 0$ for $i=1,2,\ldots,n$.

The relations between the following three successively stronger
properties were investigated in \cite{MR3098474}:

\begin{defn} \label{defn:diam2p}A Banach space $Y$ has the
  \begin{enumerate}
  \item [(i)] {\it local diameter 2 property (LD2P)} if every slice of $B_Y$ has
    diameter 2.
  \item [(ii)] {\it diameter 2 property (D2P)} if every
    non-empty relatively weakly open subset in $B_Y$ has diameter 2.
  \item [(iii)] {\it strong diameter 2 property (SD2P)} if every finite convex
    combination of slices of $B_Y$ has diameter 2.
  \end{enumerate}
\end{defn}
 
From \cite[Theorem~2.4]{BGLPRZ4} it is known that LD2P $\not\Rightarrow$
D2P  and from \cite[Theorem~1]{HL2} or \cite[Theorem~3.2]{ACBGLPR}
that D2P $\not\Rightarrow$ SD2P. 

Using Theorem \ref{thm:abr} we obtain in terms of subspaces
characterizations of the mentioned diameter 2 properties in the
following way.

\begin{prop}
  Let $Y$ be a Banach space. Then $Y$ has the $SD2P$ (resp. the $D2P$, $LD2P$)
  if and only if every separable ai-ideal in $Y$ does.  
\end{prop}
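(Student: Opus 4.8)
The plan is to prove both implications by exploiting the local, finite-dimensional nature of all three diameter 2 properties together with the fact (Theorem~\ref{thm:abr}) that separable ai-ideals are abundant. The forward direction is the easy one: if every separable ai-ideal in $Y$ inherits the property, then in particular $Y$ itself qualifies once we observe that $Y$ is (trivially) a separable ai-ideal in $Y$ when $Y$ is separable; in the non-separable case one instead argues directly that the relevant property of $Y$ is witnessed inside some separable ai-ideal. Actually, the clean way to handle the forward direction uniformly is: suppose $Y$ fails, say, the SD2P, so there is a finite convex combination of slices $S=\sum_{i=1}^n\lambda_i S(y_i^\ast,\eps_i)$ with $\diam S<2$. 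First I would build a separable subspace $X$ of $Y$ containing two elements realizing (up to a small error) the deficient diameter of each slice-combination, and with $W=\cspann\{y_1^\ast,\dots,y_n^\ast\}$; apply Theorem~\ref{thm:abr} to get a separable ai-ideal $Z\supset X$ with $\varphi\in\HB_{ai}(Z,Y)$ and $\varphi(Z^\ast)\supset W$. Then the functionals $z_i^\ast:=y_i^\ast|_Z$ satisfy $\|z_i^\ast\|=\|\varphi z_i^\ast\|=\|y_i^\ast\|=1$ (using that $y_i^\ast\in\varphi(Z^\ast)$), and the corresponding convex combination of slices of $B_Z$ has diameter $\le\diam S<2$. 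Hence $Z$ fails the SD2P, contradicting the hypothesis.

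For the reverse direction, assume $Y$ has the SD2P and let $Z$ be a separable ai-ideal in $Y$ with associated $\varphi\in\HB_{ai}(Z,Y)$. I want to show every finite convex combination of slices of $B_Z$, say $\sum_{i=1}^n\lambda_i S(z_i^\ast,\eps_i)$ with $z_i^\ast\in S_{Z^\ast}$, has diameter $2$. Fix $\eta>0$. Lift each $z_i^\ast$ to $\widetilde z_i^\ast:=\varphi z_i^\ast\in S_{Y^\ast}$ (norm preserved because $\varphi$ is a Hahn--Banach extension operator). Since $Y$ has the SD2P, the combination $\sum_i\lambda_i S(\widetilde z_i^\ast,\eps_i/2)$ of slices of $B_Y$ has diameter $2$, so there are points $u,v$ in it with $\|u-v\|>2-\eta$; write $u=\sum_i\lambda_i u_i$, $v=\sum_i\lambda_i v_i$ with $u_i,v_i\in B_Y$ and $\widetilde z_i^\ast(u_i)>1-\eps_i/2$, likewise for $v_i$. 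Now push the finitely many vectors $u_i,v_i$ (and, if needed, a witness for $\|u-v\|$) back into $Z$: apply property (b) of Theorem~\ref{thm:aln2}/Lemma~\ref{lem:ai-equi} with $E=\spann\{u_i,v_i: i\le n\}$, $F=\spann\{z_i^\ast:i\le n\}$, $\eps$ small, and $\delta$ small, obtaining $T:E\to Z$ with $(1-\eps)\|e\|\le\|Te\|\le(1+\eps)\|e\|$ and $|\varphi z_i^\ast(e)-z_i^\ast(Te)|<\delta\|e\|\|z_i^\ast\|$ for all $e\in E$. Then $Tu_i/(1+\eps)$ and $Tv_i/(1+\eps)$ lie in $B_Z$, and $z_i^\ast\big(Tu_i/(1+\eps)\big)\ge\big(\widetilde z_i^\ast(u_i)-\delta\|u_i\|\big)/(1+\eps)>1-\eps_i$ once $\eps,\delta$ are small relative to $\eps_i$; so $\sum_i\lambda_i Tu_i/(1+\eps)$ and the analogous $v$-combination lie in $\sum_i\lambda_i S(z_i^\ast,\eps_i)$, and their difference has norm $\ge\big((1-\eps)/(1+\eps)\big)\|u-v\|>(2-\eta)(1-\eps)/(1+\eps)$. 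Letting $\eta,\eps\to 0$ shows the diameter is $2$.

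The D2P and LD2P cases are handled by the same machinery. For LD2P one works with a single slice, which is notationally lighter. For D2P the only change is that a basic nonempty relatively weakly open subset of $B_Z$ has the form $\{z\in B_Z:|z_i^\ast(z)-z_i^\ast(z_0)|<\eps_i,\ i\le n\}$; lifting the $z_i^\ast$ via $\varphi$ and a point $z_0\in B_Z$ to a matching weakly open set in $B_Y$, picking two far-apart points there, and transporting them back into $Z$ with $T$ as above (so that the weak-neighbourhood conditions $|z_i^\ast(Tu)-z_i^\ast(z_0)|<\eps_i$ are preserved up to the $\delta$-error) gives two points in the $B_Z$-neighbourhood with difference close to $2$.

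The main obstacle is bookkeeping the two separate approximation parameters in Lemma~\ref{lem:ai-equi} --- the geometric distortion $\eps$ in (d2$'$) and the functional error $\delta$ in (d3$'$) --- so that the transported points genuinely land in the prescribed slices/neighbourhoods of $B_Z$ (not merely close to them) while the lower bound on $\|Tu-Tv\|$ degrades by an arbitrarily small amount; once one fixes the target $\eps_i$'s first and then chooses $\eps,\delta,\eta$ small enough in terms of them, everything goes through, but the order of quantifiers must be set up carefully. A secondary point to get right is that $\|z_i^\ast\|=1$ exactly, which is why we must take $z_i^\ast=\varphi^{-1}$ of something, or rather start from $z_i^\ast\in S_{Z^\ast}$ and use $\|\varphi z_i^\ast\|=\|z_i^\ast\|$ --- this is automatic from $\varphi\in\HB(Z,Y)$ and needs no appeal to $\varphi(Z^\ast)\supset W$ in the reverse direction; the norming-range hypothesis is only needed in the forward direction to pull $y_i^\ast$ back without losing norm.
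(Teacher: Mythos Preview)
Your proof is correct. For the direction ``every separable ai-ideal has the property $\Rightarrow$ $Y$ has it'' (what you call the forward direction), your contrapositive argument coincides with the paper's direct one: apply Theorem~\ref{thm:abr} to obtain a separable ai-ideal $Z$ with $\varphi(Z^\ast)\supset\spann\{y_i^\ast\}$, observe that the pulled-back functionals $z_i^\ast=y_i^\ast|_Z$ lie in $S_{Z^\ast}$ (because $y_i^\ast\in\varphi(Z^\ast)$ forces $\varphi z_i^\ast=y_i^\ast$), and note that the resulting slices of $B_Z$ sit inside those of $B_Y$. Your preliminary idea of placing diameter witnesses into $X$ is unnecessary but harmless.

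For the direction ``$Y$ has the property $\Rightarrow$ every separable ai-ideal has it'' (your reverse direction), the paper does not argue at all but simply quotes \cite[Propositions~3.2, 3.3 and Corollary~3.4]{ALN2}. You instead give a self-contained proof, lifting the defining functionals via $\varphi$, choosing diameter-$2$ witnesses in $B_Y$, and transporting them back into $B_Z$ with the almost-isometric operator $T$ from Theorem~\ref{thm:aln2}/Lemma~\ref{lem:ai-equi}. This is essentially the mechanism behind the cited results in \cite{ALN2}, so the route is not genuinely different; the gain is that your write-up is explicit and self-contained, and it visibly works for \emph{any} ai-ideal, not just separable ones. The bookkeeping you flag (choosing $\eps,\delta,\eta$ small relative to the fixed $\eps_i$) is indeed the only thing requiring care, and you have it in the right order.
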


  \begin{proof}
  Sufficiency was established in \cite [Propositions~3.2, 3.3 and
  Corollary~3.4]{ALN2} where it was proved that every ai-ideal in $Y$
  has the SD2P (resp. the D2P, LD2P) whenever $Y$ has. 

  First let us prove the necessity for the SD2P. To this end let $\eps_k
  > 0$ for $k = 1, \ldots, n$ and $S = \sum_{k=1}^n \lambda_k S_k$ a
  finite convex combination of slices $S_k = \{y \in B_Y: \ys_k(y) >
  1- \eps_k\}$ of the unit ball of $Y$. By Theorem
  \ref{thm:abr} find a separable ai-ideal $X$
  in $Y$ such that $\spann(\ys_k)_{k = 1}^n \subset
  \varphi(\Xs)$ where $\varphi \in \HB_{ai}(X,Y)$. For $k =
  1, \ldots, n$ find $\xs_i \in S_{X^*}$ such that $\ys_k =
  \varphi(\xs_k)$. Define the slices $S_k' = \{x \in B_X: \xs_k(x) > 1- \eps_k\} =
  \{x \in B_X: \varphi \xs_k(x) > 1- \eps_k\}$ and note that $S_k'
  \subset S_k$. Now the convex combination of slices $S' =
  \sum_{k=1}^n \lambda_k S_k'$ has diameter
  2 by assumption. As $S' \subset S$, we get that $S$ has diameter 2
  as well.

  For the LD2P the result follows by taking $k=1$
  in the argument above. 

  For the D2P property let $V$ be a
  relatively weakly open subset in $B_Y$. Find $y_0 \in V$ and
  $y_i^\ast \in Y^\ast$ such that $V_\eps=\{y \in B_Y:
  |y_i^\ast(y-y_0)|<\eps,
  i=1,\cdots,n\} \subset V$. By Theorem
  \ref{thm:abr} find a separable ai-ideal $X$
  in $Y$ which contains $y_0$ and such that $\spann(\ys_k)_{k = 1}^n \subset
  \varphi(\Xs)$ where $\varphi \in \HB_{ai}(X,Y)$. Then a
  similar argument as above will finish the proof.
\end{proof}

Recall that a Banach space $Y$ has the \emph{Daugavet property} if 
for every rank one operator $T: Y \to Y$ the equation
$\|I+T\| =  1 + \|T\|$ holds where $I$ is the identity operator on $Y$.
One can show that the Daugavet property is equivalent to the
following statement (see \cite{MR1784413}): For every $\eps > 0$, every $y^*_0 \in
S_{Y^*}$, and every $y_0 \in S_Y$ there exists a point $y$ in the slice $S(y^*_0,\eps_0)$
such that $\|y + y_0\| \ge 2 - \eps$. 

Using Theorem \ref{thm:abr} we get a similar characterization
of the Daugavet property as for the diameter 2 properties.

\begin{prop}
  A Banach space $Y$ has the Daugavet property if and only if every
  separable ai-ideal in $Y$ does.
\end{prop}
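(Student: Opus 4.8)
The plan is to mimic the structure of the proof of the previous proposition. Sufficiency is immediate once one knows that an ai-ideal inherits the Daugavet property from its superspace; this was essentially already done in \cite{ALN2}, but if a citation is not available it can be reproved directly: given $\eps>0$, $x_0^*\in S_{X^*}$, $x_0\in S_X$, push $x_0^*$ up to $\varphi x_0^*\in S_{Y^*}$ via $\varphi\in\HB_{ai}(X,Y)$, apply the Daugavet property of $Y$ to get $y\in S(\varphi x_0^*,\eps/2)$ with $\|y+x_0\|\ge 2-\eps/2$, and then use property (b) of Lemma~\ref{lem:ai-equi} applied to $E=\spann(y,x_0)$ and $F=\spann(x_0^*)$ to produce $T:E\to X$ with $\|Ty-y\|$, $\|Tx_0-x_0\|$, and $|\varphi x_0^*(y)-x_0^*(Ty)|$ all tiny; the point $Ty/\|Ty\|\in S_X$ then lies (up to shrinking $\eps$) in the slice $S(x_0^*,\eps)$ of $B_X$ and satisfies $\|Ty/\|Ty\|+x_0\|\ge 2-\eps$.

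For necessity, assume every separable ai-ideal in $Y$ has the Daugavet property and fix $\eps>0$, $y_0^*\in S_{Y^*}$, $y_0\in S_Y$. By Theorem~\ref{thm:abr} choose a separable ai-ideal $X$ in $Y$ with $y_0\in X$ and $y_0^*\in\varphi(X^*)$ for some $\varphi\in\HB_{ai}(X,Y)$; pick $x_0^*\in S_{X^*}$ with $\varphi x_0^*=y_0^*$. Since $\|\varphi x_0^*\|=\|x_0^*\|=1$ and $\varphi x_0^*$ agrees with $x_0^*$ on $X$, the slice $S(x_0^*,\eps)$ of $B_X$ is contained in the slice $S(y_0^*,\eps)$ of $B_Y$, and $y_0\in S_X$. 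Applying the Daugavet property of $X$ to $x_0^*$, $y_0$, $\eps$ yields $x\in B_X$ with $x_0^*(x)>1-\eps$ and $\|x+y_0\|\ge 2-\eps$. Viewing $x$ as an element of $B_Y$, we have $y_0^*(x)=x_0^*(x)>1-\eps$, so $x$ lies in $S(y_0^*,\eps)$, and $\|x+y_0\|\ge 2-\eps$, which is exactly the slice characterization of the Daugavet property for $Y$.

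The only mild subtlety — and the place I'd be most careful — is the normalization step in the sufficiency direction: the operator $T$ produced by Lemma~\ref{lem:ai-equi} is only almost isometric, so $Ty$ and $Tx_0$ need to be rescaled to land on $S_X$, and one must check that the slice membership and the lower norm estimate survive this rescaling; this is handled routinely by choosing the parameters $\delta,\eps$ in Lemma~\ref{lem:ai-equi} small enough relative to the target $\eps$ (using $\|Ty\|,\|Tx_0\|\in(1-\delta,1+\delta)$ and $\|x_0\|=1$). The necessity direction is essentially free once Theorem~\ref{thm:abr} is in hand, since the containment of slices $S(x_0^*,\eps)\subset S(y_0^*,\eps)$ follows immediately from $\varphi$ being a Hahn-Banach extension operator, with no perturbation needed at all.
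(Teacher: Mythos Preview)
Your proof is correct. The sufficiency direction matches the paper's (which simply cites \cite[Proposition~3.8]{ALN2}); your additional sketch of a direct argument is fine and the normalization subtlety you flag is indeed routine.

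For necessity, however, you take a genuinely different route from the paper. You invoke the full strength of Theorem~\ref{thm:abr}, choosing the ai-ideal $X$ so that $y_0^*\in\varphi(X^*)$; then $x_0^*=\varphi^{-1}(y_0^*)$ has norm exactly $1$, the slice $S(x_0^*,\eps)$ sits inside $S(y_0^*,\eps)$ on the nose, and no perturbation is needed. The paper instead ignores the ``$\varphi(Z^*)\supset W$'' clause entirely: it picks some $y_1\in S(y_0^*,\delta)$, takes a separable ai-ideal $Z\supset\spann\{y_0,y_1\}$, and works with the \emph{restriction} $y_0^*|_Z$. This restriction need not have norm $1$, so the paper normalizes it and uses the presence of $y_1\in Z$ to guarantee $\|y_0^*|_Z\|>1-\delta$, then absorbs the resulting $(1-\delta)^2$ factor by choosing $\delta$ small. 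Your argument is cleaner and uses Theorem~\ref{thm:abr} more fully; the paper's argument is slightly more robust in that it would work with any version of the main theorem that produces a separable ai-ideal containing a prescribed separable subspace, without the extra control on the range of $\varphi$.
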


\begin{proof}
  That the Daugavet property is inherited by ai-ideals is proved in
  \cite[Proposition~3.8]{ALN2}. Let $\eps > 0$ and choose positive
  $\delta < \eps$ such that $(1- \delta)^2> 1 -
  \eps$. Let $y^*_0 \in S_{Y^*}$ and $y_0 \in S_Y$. We must show that
  the slice $S(y^*_0, \eps) = \{y  \in B_Y: y^*_0(y) > 1 - \eps\}$,
  contains $y$ such that $\|y_0 + y\| > 2 - \eps$. To this
  end, choose $y_1 \in S(y^*_0, \delta)$ and find a separable ai-ideal
  $Z$ which contains $\spann\{y_0, y_1\}$. By construction the slice
  $S(y^*_0|_Z/\|y^*_0|_Z\|, \delta)$ of $B_Z$ is non-empty and by
  assumption there exists $y \in S(y^*_0|_Z/\|y^*_0|_Z\|, \delta)$ with
  $\|y_0 + y\| > 2 - \delta$. Since $y^*_0(y) =
  \|y^*_0|_Z\|y^*_0|_Z(y)/\|y^*_0|_Z\| > \|y^*_0|_Z\| (1 - \delta) > (1 -
  \delta)(1 - \delta) > 1- \eps$,
  we are done.  
\end{proof}

In \cite{ALL} the notion of an almost square Banach space was
introduced and studied. 

\begin{defn}
  A Banach space $Y$ is said to be \emph{almost square (ASQ)} if for every
  $\eps > 0$ and every finite set $(y_n)_{n=1}^N \subset S_Y$, there
  exists $y \in S_Y$ such that
  \[\|y_n - y\| \le 1+ \eps \text{ for every } 1 \le n \le N.\]
\end{defn}

The notion of an ASQ space is in some sense, but not quite, dual to the well
established notion of an octahedral space (see Definition
\ref{defn:octa} and the paragraph that follows) introduced by Godefroy in
\cite{G-octa}. We will discuss octahedral spaces briefly below. 

Among the examples of ASQ spaces we find
the much studied class of (non-reflexive) M-embedded spaces
\cite[Corollary~4.3]{ALL}, i.e. spaces $Y$ of the form $Y^{***} = Y^*
\oplus_1 Y^\perp$ (cf. \cite{HWW} for the theory
of such spaces). It is not hard to see from Theorem
\ref{thm:asq-findim} below that ASQ spaces contain copies of
$c_0$. ASQ spaces also possess the SD2P (see
\cite[Proposition~2.5]{ALL} and \cite[Theorem~3.3]{HLP}), but
not all spaces with the SD2P are ASQ. Take
e.g. $C[0,1]$ which is Lindenstrauss and thus has the SD2P
\cite[Proposition~4.6]{ALN2}. Moreover, if we let $y_1$ to be the constant
one function and $y_2 = -y_1$ one easily sees that $C[0,1]$ fails to be ASQ. 

The following characterization of ASQ spaces was obtained in
\cite[Theorem~3.6]{ALL}. 

\begin{thm}\label{thm:asq-findim}
  Let $Y$ be a Banach space.
  If $Y$ is ASQ then for every
  finite dimensional subspace $E \subset Y$
  and $\eps > 0$
  there exists $y \in S_Y$ such that
  \begin{equation}\label{eq:asq}
    (1-\eps)\max(\|x\|,|\lambda|)
    \le \|x + \lambda y\|
    \le
    (1+\eps)\max(\|x\|,|\lambda|)
  \end{equation}
  for all $\lambda \in \err$ and all $x \in E$.
\end{thm}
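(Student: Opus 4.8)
The plan is to derive the inequality \eqref{eq:asq} from the ASQ property in three moves: replace the unit sphere $S_E$ of $E$ by a finite net, feed the \emph{symmetrized} net into the definition of ASQ, and then propagate the resulting estimate to arbitrary elements $x+\lambda y$ by elementary convexity. Assume $E\neq\{0\}$ (otherwise \eqref{eq:asq} is trivial) and fix $\eps>0$. Since $S_E$ is compact I would pick a finite $\tfrac{\eps}{2}$-net $\{x_1,\dots,x_N\}\subset S_E$ and apply the ASQ property to the finite set $\{\pm x_1,\dots,\pm x_N\}\subset S_Y$ with parameter $\tfrac{\eps}{2}$, obtaining $y\in S_Y$ with $\|x_n\pm y\|\le 1+\tfrac{\eps}{2}$ for every $n$. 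For an arbitrary $\hat x\in S_E$ one chooses $x_n$ with $\|\hat x-x_n\|<\tfrac{\eps}{2}$ and concludes $\|\hat x\pm y\|\le 1+\eps$. So the construction reduces to a purely geometric claim.

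The claim is: if $y\in S_Y$ satisfies $\|\hat x+y\|\le 1+\eps$ and $\|\hat x-y\|\le 1+\eps$ for every $\hat x\in S_E$, then $(1-\eps)M\le\|x+\lambda y\|\le(1+\eps)M$ for all $x\in E$ and $\lambda\in\err$, where $M:=\max(\|x\|,|\lambda|)$. For the upper bound one writes $x+\lambda y=M\big(u+(\lambda/M)y\big)$ with $u=x/M$ (the cases $x=0$ and $\lambda=0$ being immediate). If $\|x\|=M$ then $u\in S_E$ and $u+(\lambda/M)y$ is the convex combination $\big(1-|\lambda|/M\big)u+\big(|\lambda|/M\big)\big(u+\sign(\lambda)y\big)$, hence of norm $\le 1+\eps$; if $|\lambda|=M$ and $x\ne0$ then $u+\sign(\lambda)y=\|u\|\big(u/\|u\|+\sign(\lambda)y\big)+\big(1-\|u\|\big)\sign(\lambda)y$ with $u/\|u\|\in S_E$, again of norm $\le 1+\eps$; so in all cases $\|x+\lambda y\|\le(1+\eps)M$.

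For the lower bound one combines $\|x+\lambda y\|+\|x-\lambda y\|\ge\max\big(\|2x\|,\|2\lambda y\|\big)=2M$ with the upper bound applied to $\|x-\lambda y\|=\|x+(-\lambda)y\|\le(1+\eps)M$, which yields $\|x+\lambda y\|\ge 2M-(1+\eps)M=(1-\eps)M$. I expect the only step that needs some care to be this last one, namely remembering that the upper estimate must be invoked for $x-\lambda y$ rather than for $x+\lambda y$; beyond that there is no real obstacle, since all of the genuine ``ASQ content'' is consumed in producing a $y$ with $\|\hat x\pm y\|\le1+\eps$ on $S_E$, after which everything takes place in the two-dimensional geometry of $\spann\{\hat x,y\}$.
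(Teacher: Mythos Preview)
Your argument is correct. The net-plus-convexity reduction is clean: the only point worth a second glance is the upper bound in the case $|\lambda|=M$, and your decomposition $u+\sign(\lambda)y=\|u\|\bigl(u/\|u\|+\sign(\lambda)y\bigr)+(1-\|u\|)\sign(\lambda)y$ handles it; the lower bound then follows immediately from the parallelogram-type inequality you wrote down.

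As for comparison: the present paper does not actually prove this theorem. It is quoted verbatim from \cite[Theorem~3.6]{ALL} and used as a black box in the proof of Theorem~\ref{thm:asq-char}. So there is no in-paper argument to set yours against. Your proof is essentially the natural one and is self-contained, which is an improvement over relying on the external reference.
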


We will use this result and Theorem \ref{thm:abr} to
obtain characterizations of an ASQ space in terms of its subspaces.

\begin{thm}\label{thm:asq-char}
  Let $Y$ be a Banach space. Then the following statements are
  equivalent.
  \begin{enumerate}
    \item[(a)]$Y$ is ASQ.
    \item[(b)]Every separable ai-ideal in $Y$ is ASQ.
    \item[(c)]Every subspace $X$ of $Y$ for which $Y/X$ does not
      contain a copy of $c_0$ is ASQ. 
    \item[(d)]Every subspace of finite codimension in $Y$ is ASQ. 
  \end{enumerate}
\end{thm}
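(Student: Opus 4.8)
The plan is to establish the cycle of implications $(a)\Rightarrow(b)\Rightarrow(c)\Rightarrow(d)\Rightarrow(a)$, relying throughout on the Main Theorem (Theorem \ref{thm:abr}) and the finite-dimensional characterization of ASQ spaces (Theorem \ref{thm:asq-findim}). The implications $(a)\Rightarrow(b)$ is already available: it is proved in \cite[Theorem~3.6 and the surrounding discussion]{ALL} (or rather follows from the fact, cited in the text, that ai-ideals inherit ASQ) — actually the cleanest route is to observe that $(a)\Rightarrow(c)$ directly and that $(c)\Rightarrow(d)$ and $(b)$ are both immediate specializations, so the real content is $(a)\Rightarrow(c)$ and $(d)\Rightarrow(a)$. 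I would organize the write-up around those two substantive steps plus the trivial reductions.

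For $(a)\Rightarrow(c)$, let $X\subseteq Y$ with $Y/X$ containing no copy of $c_0$; I want to show $X$ is ASQ using Theorem \ref{thm:asq-findim}, i.e. given a finite-dimensional $E\subseteq X$ and $\eps>0$ I must produce $y\in S_X$ satisfying \eqref{eq:asq} relative to $E$. Since $Y$ is ASQ, Theorem \ref{thm:asq-findim} hands me a $y\in S_Y$ with the two-sided estimate for all $x\in E$, $\lambda\in\err$. The point is that $y$ need not lie in $X$, but the estimate says $\|x+\lambda y\|\approx\max(\|x\|,|\lambda|)$; iterating, one can build inside $Y$ an isometric-up-to-$\eps$ copy of $c_0$ spanned by such vectors $y_1,y_2,\dots$, and looking at the images in the quotient $Y/X$: if infinitely many of these directions stayed "far" from $X$ we would get a copy of $c_0$ in $Y/X$, contradicting the hypothesis. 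Hence for $E$ and $\eps$ fixed one can find such a $y$ at distance $<\eps$ from $X$, and then correcting $y$ to a nearby unit vector of $X$ preserves \eqref{eq:asq} up to a worse $\eps$. This is the step I expect to be the main obstacle: making precise the "$c_0$ from ASQ directions" argument and the quotient-distance bookkeeping — one has to be careful that finitely many corrections accumulate controllably and that the $c_0$-extraction really uses only the max-estimate \eqref{eq:asq}, perhaps invoking a Bessaga–Pe{\l}czy{\'n}ski type selection in $Y/X$.

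The reductions $(c)\Rightarrow(d)$ and $(c)\Rightarrow(b)$ are immediate: if $X$ has finite codimension then $Y/X$ is finite-dimensional and a fortiori contains no copy of $c_0$; and if $Z$ is a separable ai-ideal in $Y$ then $Y/Z$ contains no copy of $c_0$ either — this is exactly the content of \cite[Corollary~4.3]{ALL} type results combined with the fact that ai-ideals, being "almost isometrically locally complemented", force any $c_0$ in the quotient back into a $c_0$ inside $Z$ (again via Theorem \ref{thm:asq-findim} applied inside the ai-ideal together with Lemma \ref{lem:ai-equi}). Alternatively, and more cheaply, $(a)\Rightarrow(b)$ is literally the statement that ai-ideals inherit ASQ, which is already recorded in the text, so I can simply cite it and only prove $(a)\Rightarrow(c)$, $(c)\Rightarrow(d)$, $(d)\Rightarrow(a)$.

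Finally, for $(d)\Rightarrow(a)$ — or, if I route through (b), for $(b)\Rightarrow(a)$ — I use Theorem \ref{thm:abr}. Given $\eps>0$ and a finite set $(y_n)_{n=1}^N\subseteq S_Y$, let $E=\spann(y_n)$ and apply the Main Theorem to get a separable ai-ideal $Z$ of $Y$ with $E\subseteq Z$; by hypothesis $Z$ is ASQ, so there is $y\in S_Z\subseteq S_Y$ with $\|y_n-y\|\le 1+\eps$ for all $n$, which is exactly the ASQ condition for $Y$. For the $(d)\Rightarrow(a)$ version one instead needs that $Y$ has a finite-codimensional ASQ subspace forcing ASQ of $Y$, which again follows from Theorem \ref{thm:asq-findim}: a vector $y$ witnessing \eqref{eq:asq} in a finite-codimensional subspace can be used to verify the defining inequality for $Y$ after absorbing the finite-dimensional complement into $E$. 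I would present $(a)\Rightarrow(c)\Rightarrow(d)$ first, then close the loop with $(d)\Rightarrow(a)$ via the Main Theorem applied to a separable ai-ideal, noting $(b)$ slots in between $(a)$ and $(d)$ using the already-cited inheritance of ASQ by ai-ideals together with the $c_0$-in-quotient observation.
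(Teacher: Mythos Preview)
Your sketch of $(a)\Rightarrow(c)$ is essentially the paper's argument: iterate Theorem~\ref{thm:asq-findim} to build a sequence $(z_k)\subset S_Y$ spanning an almost-isometric copy of $c_0$ over the given finite-dimensional $E$, observe that the quotient map $\pi:Y\to Y/X$ cannot be bounded below on $\overline{\spann}(z_k)$ (else $Y/X$ would contain $c_0$), extract a norm-one combination $\sum\lambda_k z_k$ with small quotient norm, and correct it to a unit vector of $X$. That part is fine.

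The genuine gap is in your treatment of (b). Your proposed route $(c)\Rightarrow(b)$ rests on the assertion that if $Z$ is a separable ai-ideal in $Y$ then $Y/Z$ contains no copy of $c_0$. This is false: $c_0$ is a strict ideal (hence an ai-ideal) in $\ell_\infty=c_0^{**}$, yet $\ell_\infty/c_0\cong C(\beta\mathbb N\setminus\mathbb N)$ certainly contains $c_0$. More generally, the Main Theorem itself produces separable ai-ideals $Z$ inside arbitrary non-separable $Y$, and there is no reason whatsoever for $Y/Z$ to avoid $c_0$. So $(c)\Rightarrow(b)$ cannot be obtained this way, and you should drop that route entirely. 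The paper handles (b) independently of (c): $(a)\Rightarrow(b)$ is a citation (ai-ideals inherit ASQ, \cite[Lemma~4.5]{ALL}), and $(b)\Rightarrow(a)$ is exactly the Main-Theorem argument you wrote down --- given $(y_n)_{n=1}^N\subset S_Y$, enclose them in a separable ai-ideal $Z$, which is ASQ by hypothesis (b), and read off the witnessing vector.

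There is a second, smaller confusion: you invoke the Main Theorem for $(d)\Rightarrow(a)$, but the separable ai-ideal it produces is essentially never of finite codimension, so hypothesis (d) does not apply to it. In fact $(d)\Rightarrow(a)$ is trivial: $Y$ itself is a subspace of finite codimension (namely zero), so (d) says directly that $Y$ is ASQ; or, as the paper phrases it, any finite subset of $S_Y$ sits inside some finite-codimensional subspace, which is ASQ by (d), and the witnessing vector there works in $Y$. No appeal to Theorem~\ref{thm:abr} or Theorem~\ref{thm:asq-findim} is needed. The clean logical structure is therefore two separate loops sharing the node (a): $(a)\Leftrightarrow(b)$ via citation and the Main Theorem, and $(a)\Rightarrow(c)\Rightarrow(d)\Rightarrow(a)$ with $(a)\Rightarrow(c)$ carrying all the work.
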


\begin{proof}
  (a) $\Rightarrow$ (b). This is proved in
  \cite[Lemma~4.5]{ALL}. 
  
  (b) $\Rightarrow$ (a). Let $\eps>0$ and $(y_n)_{n
    = 1}^N \subset S_Y$, and find by Theorem \ref{thm:abr} a separable
  ai-ideal $Z$ in $Y$ containing $(y_n)_{n=1}^N$. As $Z$ is  ASQ there
  exists $z \in S_Z$ such that $\|y_n - z\| \le 1 + \eps$ and we are done.   

  (c) $\Rightarrow$ (d) is trivial and (d) $\Rightarrow$ (a) is clear as every finite set of points is contained in a subspace of
  finite codimension in $Y$. 
  
  (a)  $\Rightarrow$ (c). Let $(y_n)_{n=1}^N \subset S_Y$,
  $E=\spann(y_n)_{n=1}^N$, and $\eps, \delta > 0$ with
  $\frac{1+\delta}{1-\delta} + \delta < 1 + \eps$. Choose a sequence
  $(\delta_k)_{k=0}^\infty$ of positive  reals such that $(\delta_k)
  \downarrow 0$ 
  and  
  \begin{equation}
    \label{eq:9}
    \Pi_{k=0}^\infty(1-\delta_k) > 1 - \delta \text{ and }
  \Pi_{k=0}^\infty(1+\delta_k) < 1 + \delta.
  \end{equation}
  Using Theorem \ref{thm:asq-findim} we can find a sequence $(z_k)
  \subset S_Y$ such that for every $y \in \spann(E \cup \{z_1, \ldots,
  z_k\})$ and every $\lambda \in \mathbb R$ we have
  \begin{equation}
    \label{eq:2}
    (1-\delta_k)\max(\|y\|,|\lambda|) \le
    \|y + \lambda z_{k+1}\| \le (1+\delta_k)\max(\|y\|,|\lambda|).
   \end{equation} 
  Now, if $z=\sum_{k=1}^K\lambda_kz_k$ we get from (\ref{eq:9}) and
  (\ref{eq:2}) that
  \begin{align}
    \label{align:9}
     (1-\delta)\max(\|y\|,|\lambda_k|_{k=1}^K) & < \|y + z\| \\& <
       (1+\delta)\max(\|y\|,|\lambda_k|_{k=1}^K)\nonumber 
  \end{align}
  for every $y \in E$. It is clear that
  the space $\overline{\spann}(z_k)_{k=1}^\infty$ is isomorphic to
  $c_0$. As $Y/X$ does not contain a copy of $c_0$, the quotient map
  $\pi: Y \to Y/X$ fails to be bounded below on
  $\overline{\spann}(z_k)_{k=1}^\infty$. From this
  it follows that there exists a linear combination
  $\sum_{k=1}^{K_1}\lambda_k z_k$ whose norm is 1 and with
  $\|\pi(\sum_{k=1}^{K_1}\lambda_k z_k)\| \le \delta/4$. Thus there is
  $f \in X$ with $\|f - \sum_{k=1}^{K_1}\lambda_k z_k\| \le
  \delta/2$. Putting $g=f/\|f\|$ we get $\|g -
  \sum_{k=1}^{K_1}\lambda_k z_k\| \le \delta$. Hence using
  (\ref{align:9}) we get for $n=1, \ldots, N$
  \begin{align*}
    \|y_n - g\|&\le \|y_n + \sum_{k=1}^{K_1}\lambda_k z_k\| +
    \|g - \sum_{k=1}^{K_1}\lambda_k z_k\|\\ & <
    (1+\delta)\max(\|y_n\|,|\lambda_k|_{k=1}^{K_1}) + \delta\\ & \le
                                                                 \frac{1+\delta}{1-\delta}
                                                                 +
                                                                 \delta
                                                                 < 1 + \eps, 
  \end{align*}
which is what we need.
\end{proof}

\begin{rem}
  From Theorem \ref{thm:abr} and Theorem \ref{thm:asq-char} (a)
  $\Rightarrow$ (b) we get that every separable subspace of an ASQ
  space $Y$ is contained in a separable subspace $Z$ in $Y$ which is
  both ASQ and ai-ideal in $Y$. This improves
  \cite[Proposition~6.5]{ALL} which says only that $Z$ can be taken to
  be ASQ. Theorem  \ref{thm:asq-char} (a)   $\Leftrightarrow$ (b) should also be
  compared with \cite[Lemma~4.1]{ALL}
\end{rem}

Let us end the paper with a result similar to Theorem
\ref{thm:asq-char} for octahedral spaces. 
   
\begin{defn}\label{defn:octa}
  A Banach space $Y$ is said to be \emph{octahedral} if for every
  $\eps > 0$ and every finite set $(y_n)_{n=1}^N \subset S_Y$, there
  exists $y \in S_Y$ such that
  \[\|y_n - y\| \ge 2- \eps \text{ for every } 1 \le n \le N.\]
\end{defn}

An easy consequence of the PLR is that spaces $Y$ of the form
$Y^{**} = Y \oplus_1 X$ where $X$ is a subspace of $Y^{**}$, i.e. the
L-embedded spaces (cf. \cite{HWW} for the theory of such spaces) are
octahedral. It is not so hard to see that octahedral spaces contain $\ell_1$
\cite{G-octa}. Quite recently octahedral spaces were studied in
\cite{BGLPRZ1} and \cite{HLP}. In \cite[Theorem~3.3]{HLP} it was
proved that a space has the SD2P if and only if its dual is
octahedral . Characterizations of the D2P and the LD2P in terms of
weaker forms of octahedrality in the dual were also established in
this paper (see \cite[Theorems~2.3 and 2.7]{HLP}). 

In \cite{MR2844454} it was proved that spaces with the almost Daugavet
property are octahedral and that in the separable case the two
properties are equivalent. Recall that a Banach space is said to be an \emph{almost
  Daugavet space} if there exists a 1-norming subspace $X$ of $Y^*$
such that for every $y^*_0 \in S_X$, every $y_0 \in S_Y$, and every $\eps > 0$
there exists a point $y$ in the slice $S(y^*_0,\eps)$ such that $\|y + y_0\| \ge 2 -
\eps$.

L{\"u}cking in \cite[Theorem~2.5]{MR2801618}
proved that separable almost Daugavet spaces satisfies
statement (c) in Theorem \ref{thm:octa-char} below. For non-separable
spaces it is as far as the author knows unknown whether octahedrality
implies almost Daugavet. We obtain, in terms of subspaces, the
following characterization of octahedral spaces.     

\begin{thm}\label{thm:octa-char}
  Let $Y$ be a Banach space. Then the following statements are
  equivalent.
  \begin{enumerate}
    \item[(a)]$Y$ is octahedral.
    \item[(b)]Every separable ai-ideal in $Y$ is octahedral.
    \item[(c)]Every subspace $X$ of $Y$ for which $Y/X$ does not
      contain a copy of $\ell_1$ is octahedral. 
    \item[(d)]Every subspace of finite codimension in $Y$ is octahedral. 
  \end{enumerate}
\end{thm}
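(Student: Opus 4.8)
\textbf{Proof proposal for Theorem \ref{thm:octa-char}.} The plan is to mirror, \emph{mutatis mutandis}, the proof of Theorem \ref{thm:asq-char}, replacing the ASQ tools by their octahedral counterparts. The implications (a) $\Rightarrow$ (b), (b) $\Rightarrow$ (a), (c) $\Rightarrow$ (d) and (d) $\Rightarrow$ (a) are handled exactly as for ASQ: for (a) $\Rightarrow$ (b) one uses that octahedrality passes to ai-ideals, which follows from \cite{ALN2} in the same way the ASQ transfer does (alternatively one gives a direct finite-dimensional argument via Theorem \ref{thm:aln2}); for (b) $\Rightarrow$ (a), given $\eps>0$ and $(y_n)_{n=1}^N\subset S_Y$, apply Theorem \ref{thm:abr} to produce a separable ai-ideal $Z\supset\spann(y_n)_{n=1}^N$, which is octahedral by hypothesis, so there is $z\in S_Z\subset S_Y$ with $\|y_n-z\|\ge 2-\eps$ for all $n$; and (c) $\Rightarrow$ (d), (d) $\Rightarrow$ (a) are trivial/clear as before, since a subspace of finite codimension has a quotient not containing $\ell_1$ only when the quotient is finite-dimensional, and any finite set of points sits inside such a subspace.

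The substantive implication is (a) $\Rightarrow$ (c), and here the plan is to run the ASQ argument with the following dictionary: the conclusion $\|y_n-z\|\ge 2-\eps$ replaces $\|y_n-z\|\le 1+\eps$, the space $c_0$ is replaced by $\ell_1$, and the key finite-dimensional input is octahedrality of $Y$ itself rather than Theorem \ref{thm:asq-findim}. Concretely: given $(y_n)_{n=1}^N\subset S_Y$ with $E=\spann(y_n)_{n=1}^N$ and $\eps,\delta>0$ suitably related, one builds inductively a sequence $(z_k)\subset S_Y$ such that each $z_{k+1}$ is ``octahedral'' over $\spann(E\cup\{z_1,\dots,z_k\})$, i.e. $\|y+\lambda z_{k+1}\|\ge (1-\delta_k)(\|y\|+|\lambda|)$ for $y$ in that span and $\lambda\in\R$; this is where octahedrality of $Y$ is invoked at each step (in the standard equivalent form: for any finite-dimensional $F\subset Y$ and $\eta>0$ there is $z\in S_Y$ with $\|f+\lambda z\|\ge(1-\eta)(\|f\|+|\lambda|)$ for $f\in F$, $\lambda\in\R$). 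One then checks that $\overline{\spann}(z_k)_{k=1}^\infty$ is isomorphic to $\ell_1$. Since $Y/X$ contains no copy of $\ell_1$, the quotient map $\pi\colon Y\to Y/X$ is not an isomorphism on this $\ell_1$-copy, hence not bounded below on it; this yields a norm-one vector $w=\sum_{k=1}^{K_1}\lambda_k z_k$ with $\|\pi w\|$ arbitrarily small, so there is $g\in S_X$ with $\|g-w\|\le\delta$. Finally, from the octahedral-type lower estimate (the analogue of \eqref{align:9}) one gets, for each $n$,
\[
\|y_n-g\|\ge \|y_n - w\|-\|w-g\|\ge (1-\delta)\bigl(\|y_n\|+\|w\|\bigr)-\delta \ge 2(1-\delta)-\delta > 2-\eps,
\]
using $\|y_n\|=\|w\|=1$, which is exactly what is needed.

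One point deserves care, and it is where the octahedral case is genuinely easier than the ASQ case: in the ASQ argument the estimate \eqref{align:9} is two-sided and one needs the \emph{upper} bound together with the maximum term $\max(\|y_n\|,|\lambda_k|_{k=1}^{K_1})$, whereas here only the \emph{lower} bound $\|y_n+w\|\ge (1-\delta)(\|y_n\|+\|w\|)$ is used, and $\|w\|=1$ is known directly rather than controlled through a max of coefficients. The fact that $\overline{\spann}(z_k)$ is isomorphic to $\ell_1$ follows from the telescoped lower estimates: iterating $\|y+\lambda z_{k+1}\|\ge(1-\delta_k)(\|y\|+|\lambda|)$ gives $\|\sum_{k=1}^K\lambda_k z_k\|\ge\bigl(\prod_{k}(1-\delta_k)\bigr)\sum_{k=1}^K|\lambda_k|\ge(1-\delta)\sum_k|\lambda_k|$, while the triangle inequality gives the reverse bound with constant $1$; hence the basis $(z_k)$ is equivalent to the unit vector basis of $\ell_1$.

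\textbf{Main obstacle.} The only real issue is to justify the inductive construction of $(z_k)$ and the passage ``$Y/X$ has no $\ell_1$ $\Rightarrow$ $\pi$ is not bounded below on the $\ell_1$-copy'': the first is a routine application of octahedrality of $Y$ at each finite stage (plus a product-of-constants bookkeeping exactly as in \eqref{eq:9}), and the second is the standard fact that a semi-embedding of $\ell_1$ which is bounded below would force $Y/X$ to contain $\ell_1$. Neither presents a genuine difficulty once the ASQ proof is in hand; the write-up is essentially a transcription of the proof of Theorem \ref{thm:asq-char} under the dictionary above, so I would present (a) $\Rightarrow$ (c) in full and dispatch the remaining implications by reference to the ASQ argument and to \cite{ALN2}.
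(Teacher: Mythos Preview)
Your proposal is correct and follows essentially the same route as the paper: the paper states that the proof proceeds exactly as for Theorem~\ref{thm:asq-char}, using \cite[Proposition~2.4]{HLP} in place of Theorem~\ref{thm:asq-findim} and adjusting to the $\ell_1$ setting, which is precisely the dictionary you describe. The only cosmetic difference is that where you invoke ``the standard equivalent form'' of octahedrality for the inductive step, the paper points explicitly to \cite[Proposition~2.4]{HLP} as the reference for that lemma.
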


The proof of this result follows along the same lines as the proof
Theorem \ref{thm:asq-char} using \cite[Proposition~2.4]{HLP}
instead of Theorem \ref{thm:asq-findim} and otherwise adjusting to the
$\ell_1$ setting. Therefore the proof will be omitted. Theorem
\ref{thm:octa-char} should be compared with \cite[Theorems~2.2 and
2.6]{BGLPRZ3}.

\section*{Acknowledgements}
The author would like to thank Professor Åsvald Lima for pointing at
the right literature to study when addressing the main problem of
the paper. 

\def\cprime{$'$} \def\cprime{$'$}
\providecommand{\bysame}{\leavevmode\hbox to3em{\hrulefill}\thinspace}
\providecommand{\MR}{\relax\ifhmode\unskip\space\fi MR }
\providecommand{\MRhref}[2]{%
  \href{http://www.ams.org/mathscinet-getitem?mr=#1}{#2}
}
\providecommand{\href}[2]{#2}


\begin{thebibliography}{ABGLPZ}

\bibitem[]{}

\bibitem[Abr14]{A2}
T.~A. Abrahamsen, \emph{Strict u-ideals and u-summands in {B}anach spaces},
  Math. Scand. \textbf{114} (2014), no.~2, 216--225. \MR{3206386}

\bibitem[ALL]{ALL}
T.~A. Abrahamsen, J.~Langemets, and V.~Lima, \emph{Almost square {B}anach
  spaces}, available at arXiv:math/1402.0818.

\bibitem[ALLN15]{ALLN}
T.~A. Abrahamsen, J.~Langemets, V.~Lima, and O.~Nygaard, \emph{On thickness and
  thinness of {B}anach spaces}, Houston J. Math. \textbf{41} (2015),
no.~1, 97--111.

\bibitem[ALN13]{MR3098474}
T.~A. Abrahamsen, V.~Lima, and O.~Nygaard, \emph{Remarks on diameter 2
  properties}, J. Convex Anal. \textbf{20} (2013), no.~2, 439--452.
  \MR{3098474}

\bibitem[ALN14]{ALN2}
\bysame, \emph{Almost isometric ideals in {B}anach spaces}, Glasgow Math. J.
  \textbf{56} (2014), no.~2, 395--407. \MR{3187906}

\bibitem[ABGLPZ15]{ACBGLPR}
M.~J. Acosta, J.~Becerra~Guerro, G.~Lopez-Perez, and R.~Zoca, \emph{Stability
  results of diameter two properties}, J. Conv. Anal. \textbf{22}
(2015), no.~1, 1--17. 

\bibitem[BGLPZ]{BGLPRZ3}
J.~Becerra~Guerrero, G.~Lopez~Perez, and R.~Zoca, \emph{Subspaces of {B}anach
  spaces with big slices}, available at arXiv:math/1410.4324.

\bibitem[BGLPZ15]{BGLPRZ1}
J.~Becerra~Guerrero, G.~Lopez-Perez, and R.~Zoca, \emph{Octahedral norms and
  convex combinations of slices in {B}anach spaces},
J. Math. Anal. Appl. \textbf{427} (2015), no.~1, 171--184. \MR{3318193}

\bibitem[BGLPZ15-2]{BGLPRZ4}
\bysame, \emph{Big slices versus big
  relatively weakly open subsets in {B}anach spaces},
J. Math. Anal. Appl. \textbf{428} (2015), no.~2, 855--865. \MR{3334951}

\bibitem[Fak72]{Fak4}
H.~Fakhoury, \emph{S\'{e}lections lin\'{e}aires associ\'{e}es au
  th\'{e}or\`{e}me de {Hahn-Banach}}, J. Funct. Anal. \textbf{11} (1972),
  436--452. \MR{0348457}

\bibitem[GK11]{MR2977626}
J.~Garbuli{\'n}ska and W.~Kubi{\'s}, \emph{Remarks on {G}urari\u\i\ spaces},
  Extracta Math. \textbf{26} (2011), no.~2, 235--269. \MR{2977626}

\bibitem[God89]{G-octa}
G.~Godefroy, \emph{Metric characterizations of first {Baire} class linear forms
  and octahedral norms}, Studia Math. \textbf{95} (1989), 1--15.\MR{1024271}

\bibitem[GKS93]{GKS}
G.~Godefroy, N.~J. Kalton, and P.~D. Saphar, \emph{Unconditional ideals in
  {Banach} spaces}, Studia Math. \textbf{104} (1993), 13--59. \MR{1208038}

\bibitem[GS88]{GoSa}
G.~Godefroy and P.~Saphar, \emph{Duality in spaces of operators and smooth
  norms on {Banach} spaces}, Illinois J. Math. \textbf{32} (1988),
672--695.\MR{0955384}

\bibitem[Gur66]{MR0200697}
V.~I. Gurari{\u\i}, \emph{Spaces of universal placement, isotropic spaces and a
  problem of {M}azur on rotations of {B}anach spaces}, Sibirsk. Mat. \v Z.
  \textbf{7} (1966), 1002--1013. \MR{0200697 (34 \#585)}

\bibitem[HM82]{MR675426}
S.~Heinrich and P.~Mankiewicz, \emph{Applications of ultrapowers to the uniform
  and {L}ipschitz classification of {B}anach spaces}, Studia Math. \textbf{73}
  (1982), no.~3, 225--251. \MR{675426 (84h:46026)}

\bibitem[HL14]{HL2}
R.~Haller and J.~Langemets, \emph{Two remarks on diameter 2 properties}, Proc.
  Est. Acad. Sci. \textbf{63} (2014), no.~1, 2--7.

\bibitem[HLP15]{HLP}
R.~Haller, J.~Langemets, and M.~P{\~o}ldvere, \emph{On duality of diameter 2
  properties}, J. Conv. Anal. \textbf{22} (2015), no.~2, 465--483. 

\bibitem[HWW93]{HWW}
P.~Harmand, D.~Werner, and W.~Werner, \emph{{$M$}-ideals in {B}anach spaces and
  {B}anach algebras}, Lecture Notes in Mathematics, vol. 1547, Springer-Verlag,
  Berlin, 1993. \MR{94k:46022}

\bibitem[KSW11]{MR2844454}
V.~Kadets, V.~Shepelska, and D.~Werner, \emph{Thickness of the unit sphere,
  {$\ell_1$}-types, and the almost {D}augavet property}, Houston J. Math.
  \textbf{37} (2011), no.~3, 867--878. \MR{2844454}

\bibitem[Kal74]{Kal}
N.~J. Kalton, \emph{Spaces of compact operators}, Math. Ann. \textbf{208}
  (1974), 267--278.

\bibitem[LL09]{MR2559177}
V.~Lima and {\AA}.~Lima, \emph{Strict u-ideals in {B}anach spaces}, Studia
  Math. \textbf{195} (2009), no.~3, 275--285. \MR{MR2559177 (2010g:46014)}

\bibitem[Lin64]{Lin-Mem}
J.~Lindenstrauss, \emph{Extension of compact operators}, Mem. Amer. Math. Soc.
  \textbf{48} (1964).

\bibitem[Lin66]{Lin}
\bysame, \emph{On nonseparable reflexive {Banach} spaces}, Bull. Amer. Math.
  Soc. \textbf{72} (1966), 967--970.

\bibitem[LT79]{LiTz2}
J.~Lindenstrauss and L.~Tzafriri, \emph{Classical {B}anach spaces {I}{I}},
  Springer, Berlin-Heidelberg-New York, 1979.

\bibitem[L{\"u}c11]{MR2801618}
S.~L{\"u}cking, \emph{Subspaces of almost {D}augavet spaces}, Proc. Amer. Math.
  Soc. \textbf{139} (2011), no.~8, 2777--2782. \MR{2801618 (2012f:46015)}

\bibitem[Lus76]{MR0433198}
W.~Lusky, \emph{The {G}urarij spaces are unique}, Arch. Math. (Basel)
  \textbf{27} (1976), no.~6, 627--635. \MR{0433198 (55 \#6177)}

\bibitem[OP07]{MR2262909}
E.~Oja and M.~P{\~o}ldvere, \emph{Principle of local reflexivity revisited},
  Proc. Amer. Math. Soc. \textbf{135} (2007), no.~4, 1081--1088 (electronic).
  \MR{2262909 (2007g:46017)}

\bibitem[Rao01]{MR1840956}
T.~S. S. R.~K. Rao, \emph{On ideals in {B}anach spaces}, Rocky Mountain J.
  Math. \textbf{31} (2001), no.~2, 595--609. \MR{1840956 (2002d:46018)}

\bibitem[Shv00]{MR1784413}
R.~V. Shvydkoy, \emph{Geometric aspects of the {D}augavet property}, J. Funct.
  Anal. \textbf{176} (2000), no.~2, 198--212. \MR{1784413 (2001h:46019)}

\bibitem[SY89]{SiYo2}
B.~Sims and D.~Yost, \emph{Linear {Hahn-Banach} extension operators}, Proc.
  Edinburgh Math. Soc. \textbf{32} (1989), 53--57.

\end{thebibliography}
\end{document}